\newtheorem{theorem}{Theorem}[section]
\newtheorem{corollary}[theorem]{Corollary}
\theoremstyle{definition}
\newtheorem{definition}[theorem]{Definition}
\newtheorem{remark}[theorem]{Remark}
\newtheorem{example}[theorem]{Example}
\theoremstyle{parrafo}
\begin{document}

\title[]{The Stein Str\"omberg Covering Theorem in metric spaces}

\author{J. M. Aldaz}
\address{Instituto de Ciencias Matem\'aticas (CSIC-UAM-UC3M-UCM) and Departamento de 
Matem\'aticas,
Universidad  Aut\'onoma de Madrid, Cantoblanco 28049, Madrid, Spain.}
\email{jesus.munarriz@uam.es}
\email{jesus.munarriz@icmat.es}

\thanks{2010 {\em Mathematical Subject Classification.} 42B25}

\thanks{The author was partially supported by Grant MTM2015-65792-P of the
MINECO of Spain, and also by by ICMAT Severo Ochoa project SEV-2015-0554 (MINECO)}






\begin{abstract} In \cite{NaTa} Naor and Tao extended to the metric setting the $O(d \log d)$ 
bounds given by Stein and Str\"omberg for Lebesgue measure in 
$\mathbb{R}^d$, deriving these bounds first from a localization result, and second, from  a
random Vitali lemma. Here we show that the Stein-Str\"omberg original argument
can also be adapted to the metric setting, giving a third proof. 
We also weaken the hypotheses, and 
additionally, we sharpen the
estimates for Lebesgue measure.
\end{abstract}


\maketitle


\markboth{J. M. Aldaz}{Stein  Str\"omberg covering theorem}

\section {Introduction} 

In \cite{StSt}, Stein and Str\"omberg proved that for Lebesgue measure in $\mathbb{R}^d$,
and with balls defined by an arbitrary norm, the centered maximal function has weak type (1,1)
bounds of order $O(d \log d)$, which is much better than the exponential bounds obtained via
the Vitali covering lemma.  Naor and Tao   extended  the  Stein-Str\"omberg
 result to the metric setting in  \cite{NaTa}.
 There, a localization result is proven (using the notion of  microdoubling, which basically entails a
 very regular growth of balls) from which the  Stein-Str\"omberg 
 bounds
 are obtained (using  the notion of  strong microdoubling, which combines microdoubling with
 local comparability). 
Also, a second argument is given, via a random Vitali Theorem that has  its
 origin in \cite{Li}.

 Here we note that the Stein-Str\"omberg original proof,
 which is shorter and conceptually simpler, can also be used in the metric setting,
 yielding a slightly more general result.
 We will
divide the  Stein-Str\"omberg argument  into two parts, one with radii separated by large gaps, and the second, with radii inside
an interval, bounded away from 0 and $\infty$. This will allow us to obtain more precise 
information about which hypotheses are needed in each case. We shall see that under the same hypotheses
used by Naor and Tao, the  Stein-Str\"omberg covering theorem for sparse radii  (cf. Theorem \ref{StSt1} below)
suffices to obtain the $d\log d$ bounds in the metric setting. But Theorem \ref{StSt1} itself is presented
in a more general version. In particular, one does not need to assume that the metric space is
geometrically doubling.

We also show that the Stein-Str\"omberg method, applied to balls with no restriction in the  radii, yields the  $O(d \log d)$ 
bounds in the metric context, for doubling measures where the growth of balls can be more 
irregular than is allowed by the microdoubling condition. 
Finally, we lower the known weak type (1,1) bounds in the case of Lebesgue measure:
For $d$ lacunary sets of radii, from $(e^2 + 1) (e + 1)$ to 
$(e^{1/d} + 1) (1 + 2  e^{1/d})$ (to 6 in the specific case of $\ell_\infty$ balls),
and for unrestricted radii, from
 $e^2 (e^2 + 1) (1 + o(1)) d \log d$ to $(2 +  3 \varepsilon) d \log d$, where   $\varepsilon > 0$
 and  $d = d(\varepsilon)$ is sufficiently large.

\section {Notation and background material}

Some of the definitions here  come from 
\cite{A2}; we refer the interested reader to that paper for motivation and additional explanations.

We will use $B(x,r) := \{y\in X: d(x,y) < r\}$ to denote open balls, 
$\overline{B(x,r)}$ to denote their topological closure, and 
$B^{cl}(x,r) := \{y\in X: d(x,y) \le r\}$ to refer to closed balls.
Recall that in a general metric space,  a ball $B$, considered as a set, can have many centers and many radii. When
we write $B(x,r)$ we mean to single out $x$ and $r$, speaking respectively of the center and the radius 
of $B(x,r)$.

\begin{definition} A Borel measure is {\em $\tau$-smooth} if for every
collection  $\{U_\alpha : \alpha \in \Lambda\}$
 of  open sets, $\mu (\cup_\alpha U_\alpha) = \sup \mu(\cup_{i=1}^nU_{\alpha_i})$,
 where the supremum is taken over all finite subcollections of $\{U_\alpha : \alpha \in \Lambda\}$.
 We say that $(X, d, \mu)$ is a {\em metric measure space} if
$\mu$ is a Borel measure on the metric space $(X, d)$, such that for all
balls $B(x,r)$, $\mu (B(x,r)) < \infty$, and furthermore, $\mu$ is $\tau$-smooth. 
\end{definition} 

The assumption of $\tau$-smoothness does not represent any real restriction, since it is consistent with standard set theory
(Zermelo-Fraenkel with Choice) that  in every metric space, every Borel measure  which
assigns finite measure to balls  is $\tau$-smooth 
(cf. \cite[Theorem (a), pg. 59]{Fre}).  

\begin{definition}\label{maxfun} Let $(X, d, \mu)$ be a metric measure space and let $g$ be  a locally integrable function 
on $X$. For each  $x\in X$, the centered Hardy-Littlewood maximal operator $M_{\mu}$ 
 is given by
\begin{equation}\label{HLMFc}
M_{\mu} g(x) := \sup _{\{r : 0 < \mu (B(x, r))\}}  \frac{1}{\mu
(B(x, r))} \int _{B(x, r)}  |g| d\mu.
\end{equation}
\end{definition}

Maximal operators can be defined using closed balls instead of open balls,
and this does not change their values, as can be seen by an approximation argument.
 When
the measure is understood, we will omit the subscript $\mu$  from  $M_\mu$.

A sublinear operator $T$ satisfies a
weak type $(1,1)$ inequality if there exists a constant $c > 0$ such that
\begin{equation}\label{weaktype}
\mu (\{T g > s \}) \le \frac{c \|g\|_{L^1(\mu)}}{s },
\end{equation}
where $c=c(T,  \mu)$ depends neither on $g\in L^1 (\mu)$
nor on $s > 0$. The lowest constant $c$ that satisfies the preceding
inequality is denoted by $\|T\|_{L^1\to L^{1, \infty}}$.

\begin{definition} A Borel measure $\mu$ on $(X,d)$ is {\em doubling}  if there exists a 
$C> 0 $ such that for all $r>0 $ and all $x\in X$, $\mu (B(x, 2 r)) \le C\mu(B(x,r)) < \infty$. 
\end{definition}

\begin{definition} \label{geomdoub} A metric space is {\it $D$-geometrically doubling}  if there exists a positive
integer $D$ such that every ball of radius $r$ can be covered with no more than $D$ balls
of radius $r/2$. 
\end{definition}

If a metric space supports a doubling measure, then it is geometrically doubling.
Regarding weak type inequalities for the maximal operator, in order to estimate $\mu \{M f > s\}$, one considers balls $B(x,r)$ over which 
$|f|$ has average
larger than $s$. Now, while in the uncentered case any such ball is contained in the corresponding
level set, this is not so for the centered maximal function. Thus, using the balls $B(x,r)$ to cover
$\{M f > s\}$ can be very inefficient. 

A key ingredient in the Stein-Str\"omberg proof is to cover
$\{M f > s\}$ by the much smaller balls $B(x,t r)$, $0 < t <<1$, something that leads to the
``microdoubling" notion of Naor and Tao. 
We slightly modify their notation, using $1/n$-microdoubling
to denote what these authors call  $n$-microdoubling.

 \begin{definition} (\cite[p. 735] {NaTa} Let $0 < t < 1$ and let $K\ge 1$. Then 
  $\mu$ is said to be $t$-microdoubling with constant $K$ if for all $x \in X$ and all $r >0$,
we have
$$
\mu B\left(x,\left(1 + t \right) r \right) \le K \mu B(x,r).
$$
\end{definition}

The next property is mentioned in \cite{NaTa}, and more extensively studied
in \cite {A2}. 

\begin{definition} \label{loccomp}  A  measure $\mu$ satisfies a {\it local comparability condition} 
if there
exists a constant $C\in[1, \infty)$ such that for  all pairs of points $x,y\in X$, and all $r >0$,
whenever $d(x,y) < r$, we have 
$$\mu(B(x,r))\le C \mu(B(y,r)).$$
 We denote the
smallest such $C$ by $C(\mu)$ or $C_\mu$.
\end{definition}

\begin{remark} \label{doublingandmicro}
 If $\mu$ is doubling with constant $K_1$ then it is microdoubling  and satisfies a local comparability
condition with the same constant $K_1$, while
if it is  $t$-microdoubling with constant $K_2$ and $2 \le (1 + t)^M$, then $\mu$ is doubling
and satisfies a local comparability
condition
with constant   $K_2^M$. Thus, the difference between doubling and 
microdoubling lies in the size of the constants, it is quantitative, not qualitative: The microdoubling condition adds something new only 
when $K_2 < K_1$, in which case it entails a greater regularity in the growth of the measure
of balls, as the radii increase. Likewise, 
bounds of the form $\mu B(x, T r) \le K \mu B(x, r)$ for $T > 2$, allow a greater irregularity in the growth of balls
than standard doubling ($T = 2$) or than microdoubling. 

We mention that  while local comparability is implied by
doubling, it is a uniformity condition, not a growth condition. Thus, it is compatible with
the failure of doubling, and even for doubling measures, it is compatible with
any rate of growth for the volume of balls.
Consider, for instance, the case of $d$-dimensional Lebesgue measure $\lambda^d$:
A doubling constant is $2^d$, a $1/d$-microdoubling constant is $e$,  and the smallest 
local comparability constant is $C(\lambda^d) = 1$. 

\end{remark}

The next definition combines the requirement that the microdoubling and the local comparability
constants be ``small" simultaneously. 

 \begin{definition} (\cite[p. 737] {NaTa} Let $0 < t < 1$ and let $K\ge 1$. Then 
  $\mu$ is said to be strong $t$-microdoubling with constant $K$ if for all $x \in X$, 
   all $r > 0$, and all $y\in B(x,r)$,
$$
\mu B\left(y,\left(1 +  t \right)r\right) \le K \mu B(x,r).
$$
\end{definition}

Thus, if $\mu$ is strong $t$-microdoubling with constant $K$, then $C(\mu) \le K$.
Also, local comparability is the same as strong $0$-microdoubling. To get a
 better understanding of how bounds depend on the different constants,
 it is useful to keep separate $C(\mu)$ and $ K$.

\begin{definition} Given a set $S$ we define its $s$-{\em blossom} as the enlarged set
\begin{equation} \label{altblossom}
Bl(S, s):= \cup_{x\in S}B(x,s),
 \end{equation}
 and its  {\em uncentered $s$-blossom} as the set
\begin{equation} \label{altublossom}
Blu(S, s):= \cup_{x\in S}\cup\{B(y, s): x\in B(y, s)\}.
 \end{equation}
When $S= B(x,r)$, we simplify the notation and write  $Bl(x,r, s)$, 
instead of $Bl(B(x,r), s)$, and likewise for uncentered blossoms. 
 We say that $\mu$ {\em blossoms boundedly} if there exists a $K\ge 1$ such that
 for all $r>0 $ and all $x\in X$, $\mu (Blu(x, r, r)) \le K \mu(B(x,r)) < \infty$. 
\end{definition}

Blossoms can be defined using closed instead of open balls, in
an entirely analogous way. To help understand the relationship between blossoms and balls,
we include the following definitions and results.

\begin{definition} A metric space  has the {\it approximate midpoint property} if for every
$\varepsilon > 0$ and every pair of points $x,y$, there exists a point $z$ such
that $d(x,z), d(z,y) <  \varepsilon + d(x,y)/2$.
\end{definition}

\begin{definition} \label{quasi}A metric space $X$  is  {\it quasiconvex} if there exists a constant
$C\ge 1$ such that for every
 pair of points $x,y$, there exists a curve with $x$ and $y$ as endpoints, such
 that its length is bounded above by $C d(x,y)$. If for every $\varepsilon > 0$ we can take  $C=1 + \varepsilon$, then we say that $X$ is a {\it length space}.
 \end{definition} 

It is well known that for a complete metric space, having the approximate midpoint property is equivalent
to being a length space.

\begin{example} \label{blossomsballs} The $s$-blossom of an $r$-ball may fail to contain a strictly larger ball,
even in quasiconvex spaces. 

For instance, let $X \subset  \mathbb{R}^2$ be the set $\{0\} \times [0,1]  \cup [0,1]\times \{0\}$ with metric
defined by restriction of the $\ell_\infty$ norm; then we can take $C = 2$. Now $B((1,0), 1) = (0,1]\times \{0\}$,
while for every $r > 1$,  $B((1,0), r) = X$, which is not contained in  $Blu((1,0), 1, 1/6)$. Furthermore, neither
$Blu((1,0), 1, 1/6)$ nor $Bl((1,0), 1, 1/6)$ are balls, i.e., given any $x\in X$ and any $r > 0$, we have
that $B(x, r) \ne Blu((1,0), 1, 1/6)$ and $B(x, r) \ne Bl((1,0), 1, 1/6)$.

On the other hand, if a  metric space $X$  has the approximate midpoint property,  then blossoms and
 balls coincide  (as we show next)
so in this case  considering
 blossoms gives nothing new. 
\end{example}

 \begin{theorem} \label{equiv}  Let $(X, d)$ be a metric space. The following are equivalent:

a)  $X$ has the approximate midpoint property.

b) For all $x\in X$, and all $r, s >0$, 
$Bl(x, r, s)  =  B(x, r + s).$

c) For all $x\in X$, and all $r >0$, 
$Bl(x, r, r)  =  B(x, 2 r).$
\end{theorem}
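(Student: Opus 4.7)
My plan is to prove the equivalence via the cycle $(b) \Rightarrow (c) \Rightarrow (a) \Rightarrow (b)$. The first implication is immediate: just specialize $s = r$ in (b). The second implication is a short unpacking of definitions: given $x, y \in X$ and $\varepsilon > 0$, set $r := d(x,y)/2 + \varepsilon/2$, so that $d(x,y) < 2r$, hence $y \in B(x, 2r)$. By (c), $y \in Bl(x, r, r)$, so there exists $z \in B(x,r)$ with $y \in B(z, r)$; these are exactly the inequalities $d(x,z), d(z,y) < r < d(x,y)/2 + \varepsilon$ required by the approximate midpoint property.

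The heart of the argument is $(a) \Rightarrow (b)$. The inclusion $Bl(x, r, s) \subseteq B(x, r+s)$ holds in any metric space by the triangle inequality and needs no hypothesis on $X$. For the reverse inclusion, I fix $w \in B(x, r+s)$, write $\delta := d(x,w) < r + s$, and set $\eta := r + s - \delta > 0$. I want to produce $z$ with $d(x,z) < r$ and $d(z,w) < s$; morally, $z$ should be a ``weighted midpoint'' of $x$ and $w$ corresponding to the ratio $\alpha := r/(r+s)$, so that $d(x,z) \approx \alpha \delta < r$ and $d(z,w) \approx (1-\alpha)\delta < s$.

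Since (a) only supplies halfway midpoints, I plan to construct weighted ones by iteration: for each $n \ge 1$, I produce a finite chain $x = p_0, p_1, \ldots, p_{2^n} = w$ with $d(p_i, p_{i+1}) \le \delta/2^n + \varepsilon$ for all $i$, where $\varepsilon > 0$ is at my disposal. The chain is built by induction on $n$, refining each edge via an approximate midpoint whose error $\varepsilon_j$ at level $j$ is chosen summable (e.g.\ $\varepsilon_j = \varepsilon/2^{j+1}$), so that telescoping keeps the spacing bound uniform. With the chain in hand, I pick the integer $k$ closest to $\alpha \cdot 2^n$, set $z := p_k$, and bound $d(x,z) \le \sum_{i < k} d(p_i, p_{i+1}) \le k\delta/2^n + k\varepsilon$ and similarly $d(z,w) \le (2^n - k)\delta/2^n + (2^n - k)\varepsilon$. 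Since $k/2^n \to \alpha$ and the targets $\alpha\delta < r$, $(1-\alpha)\delta < s$ enjoy margins of at least $\alpha\eta$ and $(1-\alpha)\eta$ respectively, choosing $n$ large and $\varepsilon$ small (calibrated against $\eta$) gives $d(x,z) < r$ and $d(z,w) < s$, as required.

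The main obstacle I anticipate is precisely this bookkeeping: the compounding errors over the $n$ levels of refinement must be kept well below the slack $\eta$, and the integer $k$ has to be placed so that the dyadic approximation $k/2^n$ to $\alpha$ preserves the two-sided margin on $r$ and on $s$ simultaneously. Everything else reduces either to a triangle-inequality inclusion or to a direct unpacking of the blossom definition.
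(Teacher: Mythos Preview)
Your proof is correct. The implications $(b)\Rightarrow(c)$ and $(c)\Rightarrow(a)$ match the paper's argument essentially verbatim, but your treatment of $(a)\Rightarrow(b)$ takes a genuinely different route. The paper passes to the completion $\hat X$ of $X$, invokes the standard fact that a complete space with the approximate midpoint property is a length space, chooses a curve $\Gamma$ from $x$ to $y$ of length $<r+s$, observes that $\Gamma([0,1])\subset B(x,r)\cup B(y,s)$, and uses the first exit time from $B(x,r)$ together with continuity to locate a point of $B(x,r)\cap B(y,s)$ in $\hat X$; density of $X$ in $\hat X$ then pulls the point back into $X$. Your approach instead stays entirely inside $X$: you iterate approximate midpoints to build a dyadic chain of $2^n+1$ points from $x$ to $w$ with controlled spacing, and then select the node $p_k$ with $k\approx \alpha\cdot 2^n$, $\alpha=r/(r+s)$, as the desired intermediate point. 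What the paper's argument buys is brevity and a clean geometric picture (curves, first exit times), at the cost of invoking completion and the length-space characterization as black boxes. What your argument buys is self-containment --- nothing beyond the raw approximate midpoint property and triangle inequality --- at the cost of the bookkeeping you yourself flag; that bookkeeping is genuine but routine, and your sketch (summable level-wise errors, then $n$ large and $\varepsilon$ small against the slack $\eta$) is detailed enough to carry through.
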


\begin{proof} Suppose first that  $X$ has the approximate midpoint property.
Since $Bl(x, r, s)  \subset  B(x, r + s),$ to prove b) it is enough to show that if
$y\in  B(x, r +  s),$ then $y \in  Bl(x, r, s)$, or equivalently, that
there is a $z\in X$ such that $d(x, z) < r$ and $d(z, y) < s$. 
If  either $d(x, y) < s$ or  $d(x, y) < r$ we can take $z = x$ and there is nothing to prove, so assume otherwise.
Let  $(\hat X,  \hat d)$ be the completion of  $(X, d)$; then $\hat X$ is a length space, since it
has the approximate midpoint property.  Let $\Gamma :[0,1] \to \hat X$ be a curve with $\Gamma (0) = x$, 
$\Gamma (1) = y$, and length $\ell (\Gamma) < r + s$. Then $\Gamma([0, 1]) \subset B(x, r) \cup B(y, s)$, 
for if there is a $w \in [0,1]$ with $\Gamma (w) \notin  B(x, r) \cup B(y, s)$, then $\ell (\Gamma)  \ge r + s$. 
Now let $c \in [0,1]$ be the time of first exit of $\Gamma (t)$ from $B(x,r)$, that is, for all $t < c$, 
 $\Gamma (t) \in B(x,r)$ and $\Gamma (c) \notin B(x,r)$ . Then $\Gamma (c) \in B(y,s)$, so by continuity of $\Gamma$, 
there is
a $\delta \in [0, c) $ such that  $\Gamma (\delta) \in B(y,s)$. Thus, the open set 
$ B(x, r) \cap B(y, s) \ne \emptyset$ in $\hat X$. But $X$ is dense in $\hat X$, so 
there exists a $z\in X$ such that $d(x, z) < r$ and $d(z, y) < s$, as we wanted to show.

Part c) is a special case of part b). From part c) we obtain a) as follows. Let $x, y \in X$, and let $r >0$ be such that $d(x,y) < 2 r$. By
hypothesis, $y\in Bl(x, r, r) = B(x, 2r)$, so there is a $z\in X$ such that $d(x, z) < r$ and  $d(z, y) < r$.
Thus, $X$ has the approximate midpoint property.
\end{proof}

\begin{example} \label{chordal} Let $X$ be the unit sphere (unit circumference) in the plane, with the chordal metric,
that is, with the restriction to $X$ of the euclidean metric in the plane. While this space does not
have the approximate midpoint property, blossoms are nevertheless geodesic balls.
However, the equality 
$Bl(x, r, s)  =  B(x, r + s)$ no loger holds. For instance,
 $Bl((1, 0), 1, 1) \ne Bl((1, 0), \sqrt2, \sqrt2 )  = B((1, 0), 2) = X \setminus \{(-1,0)\}$.
 \end{example}

\section{Microblossoming and related conditions}

 \begin{definition} \label{bmicroblu} Let $0 < t <  1$ and let $K\ge 1$. Then 
  $\mu$ is said to $t$-microblossom boundedly   with constant $K$,
   if for all $x \in X$ and all $r >0$,
we have
\begin{equation}
\mu (Blu \left(x, r,  t r \right)) \le K \mu B(x,r).
\end{equation}
\end{definition}

We shall say $\mu$  is a measure that $(t,K)$-microblossoms, instead of using the longer
expression.

 \begin{example} \label{bmicrobluex} 
  Microblossoming (even together with doubling) is  more general than microdoubling,
 in a quantitative sense. Consider  $(\mathbb{Z}^d, \ell_\infty, \mu)$, where $\mu$ is
 the counting measure. Then $\mu$ is doubling, and ``microdoubling in the large", since for
 large radii ($r > d$), $\mu$ can be regarded as a discrete approximation to Lebesgue measure.
 However,
 $\mu B(0,1) = 1$, and for every $t > 0$, $\mu B(0,1 + t ) \ge 3^d$, no matter how small
 $t$ is. Thus,  the measure $\mu$ is not $(t,K)$-microdoubling, for any $K < 3^d$, $0 < t << 1$. However,
 $\mu$ is $1/d$-microblossoming, since for $r > d$, $\mu$ behaves as a microdoubling 
 measure, and for $r \le d$, $Blu(x, r, r/d) = B(x,r)$.
 
 A less natural but stronger example is furnished by the measure $\mu$ given by
 \cite[Theorem 5.9]{A2}. Since $\mu$ satisfies a local comparability condition,
 and is defined in a geometrically doubling space, it blossoms boundedly, so it
 microblossoms boundedly (at least with the blossoming constant). But $\mu$  is not doubling,
 and hence it is not microdoubling. 
 \end{example}
 
 \begin{example} \label{bmicroblunonblu} While $(t,K_1)$-microdoubling entails
 $(2, K_2)$-doubling for some $K_2 \ge K_1$, the analogous statement is not true
 for microblossoming.  The following  example shows that $(1/2,1)$-microblossoming
 does not entail local comparability.
 Let $X = \{0, 1, 3\}$ with the inherited metric from $\mathbb{R}$, and let $\mu = \delta_3$.
 Then $B(0,3) \cap B(3, 3) = \{1\}$, but $\mu B(0,3) = 0$ while $\mu B(3,3) = 1$, so 
 local comparability fails. Since bounded blossoming
 implies local comparability, all we have to do is to check that $\mu$ is $(1/2,1)$-microblossoming.
 For $t\le 3$, $B(0,t) \subset Blu(0, t, t/2) \subset \{0,1\}$, so  $\mu B(0,t) = \mu Blu(0, t, t/2)= 0$,
 and for $t > 3$,  $B(0,t) =  Blu(0, t, t/2) = X$. Likewise, for $t\le 2$, $B(1,t) = Blu(1, t, t/2) \subset \{0,1\}$, so  
$\mu B(1,t) = \mu Blu(1, t, t/2)= 0$,
 and for $t > 2$,  $B(1,t) =  Blu(1, t, t/2) = X$.
 \end{example}

\begin{definition} Given a metric measure space $(X, d, \mu)$, and denoting the support of $\mu$ by 
$supp(\mu)$,
the {\em relative increment function} of $\mu$, $ri_{\mu}: supp(\mu)\times (0,\infty)\times [1,\infty)$,
is defined as
\begin{equation} \label{ri}
ri_{\mu}(x, r, t) := \frac{\mu B(x, tr)}{\mu B(x, r)},
\end{equation}
and the {\em maximal relative increment function}, as
\begin{equation} \label{mri}
mri_{\mu}(r, t) := \sup_{x \in supp(\mu)}\frac{\mu B(x, tr)}{\mu B(x, r)}.
\end{equation}
When $\mu$ is understood we will simply write $ri$ and $mri$.
\end{definition}

This notation allows one to unify different conditions that have been considered regarding
the boundedness of maximal operators. For instance, on $supp(\mu)$ 
the doubling condition simply
means that there is a constant $C\ge 1$ such that for all $r > 0$, $mri_{\mu}(r, 2) \le C$, and
the $d^{-1}$-microdoubling condition,
that for all $r > 0$, $mri_{\mu}(r, d^{-1}) \le C$. Note that by $\tau$-smoothness, the complement of
the support of $\mu$ has $\mu$-measure zero, so the relative increment function is defined
for almost every $x$. 

 \begin{example} \label{macrodoub}
 The interest of considering values of $t >2$ in the preceding 
 definition comes from the fact that, under the additional assumption of microblossoming, it
 will allow a  much  more irregular 
  growth of balls than microdoubling or plain doubling, without a comparable  worsening 
  of the estimates for the weak type (1,1) bounds. 

To fix ideas, consider the right hand side 
$ C(\mu) \ K_1 K \left(2 + \frac{\log K_2}{\log K}\right)$ 
of formula (\ref{sum}) below. This bound is related to the centered maximal operator when the supremum is restricted
to radii $R$ between $r$ and $T r$, $T > 1$. The constant $K_2$ depends on $T$, as it must satisfy 
$ mri_{\mu}(r, T) \le K_2$.
 For Lebesgue measure
on $\mathbb{R}^d$ with the $\ell_\infty$-norm, $ C(\lambda^d) = 1$. If we set $T=2$, then we can take
$K_2 = 2^d$, while $K_2 = d^d$ for $T=d$,   a choice which yields bounds of order $O( d \log d)$.
 A  $1/d$-microdoubling constant is $K_1 = e$ ($\mathbb{R}^d$ has the approximate midpoint
property, and in fact it is a geodesic space, so microdoubling is the same as microblossoming in this case) and $K := \max\{K_1, e\} = e$.
  
  Returning to Example \ref{bmicrobluex},  by a rescaling  argument it is clear that  the situation
  for 
    $(\mathbb{Z}^d, \ell_\infty, \mu)$ cannot be much worse than for
    $(\mathbb{R}^d, \ell_\infty, \lambda^d)$, and in fact it is easy to see that
    the same argument of Stein and Str\"omberg (which will be presented in greater
    generality below) yields  the  $O( d \log d)$ bounds. Now suppose  we modify
    the measure so that at one single point it is much smaller. Clearly, this will have
    little impact in the weak type (1,1) bounds, since for $d >>1$, $x \in \mathbb{Z}^d,$ and $r>1$, the measure of $B(x,r)$
    will be changed by little or not at all, while for $ r \le 1$, balls with distinct centers do not
    intersect. For definiteness, set $\nu = \mu$ on  $\mathbb{Z}^d \setminus \{0\}$,
    and $\nu \{0\} = d ^{-d}$. Then the doubling constant, and the 
$(t,K)$-microdoubling constant, for any $t > 0$, is at least $d ^{d} (3 ^{d} -1) \le  K = K_2$,  much larger than the corresponding constants for $\mu$.
However, the local comparability constant is still very close to 1, since intersecting balls of the same radius must
contain at least $3^d$ points each, and a $1/d$-microblossoming constant can be taken to be very close to $e$.
Setting 
$T = d$, we get $K_2 \le d^d (2d + 1)^d$, so $\log K_2$ in this case is comparable to the constant obtained when $T = 2$.
 \end{example}

 \begin{remark} One might define $(T, K)$-macroblossoming, 
 with $T > 1$, by analogy with Definition \ref{bmicroblu}. 
 However, since $B(x, T r) \subset Blu(x, r, T r)$, assuming
 directly that $mri_{\mu}(r, T) \le K$ is not stronger than $(T, K)$-macroblossoming, 
\end{remark}

\section{The Stein-Str\"omberg covering theorem}

Next, we present the Stein-Str\"omberg argument 
using the terminology of blossoms. Note  that
the next theorem does not require $X$ to be geometrically doubling.

Given an ordered sequence of sets $A_1, A_2, \dots$, we denote by $D_1, D_2, \dots$
its sequence of disjointifications, that is 
$D_1 = A_1$, and $D_{n + 1} = A_{n + 1}\setminus \cup_1^n A_i$. We shall avoid reorderings and
relabelings of collections of balls, as this may lead to confusion regarding the meaning of $D_j$. The
unfortunate downside of this choice  is an inflation of subindices.

\begin{theorem}\label{StSt1} {\bf Stein-Str\"omberg covering theorem for sparse radii.} Let $(X, d, \mu)$ be a metric measure space, where $\mu$ satisfies a $C(\mu)$ local comparability condition, and let $R:= \{r_n: n\in \mathbb{Z}\}$ be a $T$-lacunary sequence of radii, i.e.,
$r_n >0$ and $r_{n+1}/r_n \ge T > 1$. Suppose there exists a $t > 0$ such that $T  t \ge 1$ and 
$\mu$ $t$-microblossoms boundedly 
  with constant $K$. Let $\{B(x_i, s_i): s_i \in R, 1 \le i \le M\}$ be a finite collection
of balls with positive measure, ordered by non-increasing radii. Set $U:= \cup_{i = 1}^M   B(x_i, t s_i)$. Then there exists a
subcollection $\{B(x_{i_1}, s_{i_1}), \dots, B(x_{i_N}, s_{i_N})\}$, 
 such that, denoting by $D_{i_j}$  the disjointifications of the reduced balls $B(x_{i_j}, t s_{i_j})$,   
\begin{equation}\label{set}
\mu U \le (K + 1) \mu \cup_{j=1}^N   B(x_{i_j}, t s_{i_j}),
\end{equation}
 and 
 \begin{equation}\label{bound}
\sum_{j=1}^N \frac{\mu D_{i_j}}{\mu B(x_{i_j}, s_{i_j})}\mathbf{1}_{B(x_{i_j}, s_{i_j})}
\le C(\mu) \ K + 1.
\end{equation}
 \end{theorem}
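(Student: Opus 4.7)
My plan is to construct the subcollection through a two-layer greedy Vitali selection, processed in the given non-increasing order of radii. When $B(x_i, s_i)$ is reached, include it in $\{B(x_{i_j}, s_{i_j})\}$ provided (i) $B(x_i, s_i)$ is disjoint from every previously selected ball of strictly larger radius, and (ii) $B(x_i, t s_i)$ is disjoint from the reduced ball of every previously selected ball of the same radius. The selected reduced balls are then globally pairwise disjoint, so $D_{i_j} = B(x_{i_j}, t s_{i_j})$, and at every $x$ the indices $j$ with $x \in B(x_{i_j}, s_{i_j})$ share a single radius level $r_n$.

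For the covering inequality (\ref{set}), I show by case analysis on a non-selected $B(x_i, s_i)$ that $B(x_i, t s_i) \subset Blu(x_{i_k}, s_{i_k}, t s_{i_k})$ for some selected $k$. If (i) is the rule that was violated, there is a witness $y \in B(x_i, s_i) \cap B(x_{i_k}, s_{i_k})$ with $s_{i_k} > s_i$; the lacunarity together with $T t \ge 1$ forces $s_i \le t s_{i_k}$, so $d(x_i, y) < s_i \le t s_{i_k}$ puts $y$ into the $t s_{i_k}$-ball $B(x_i, t s_{i_k})$, which therefore meets $B(x_{i_k}, s_{i_k})$ and sits inside the blossom, while $B(x_i, t s_i) \subset B(x_i, t s_{i_k})$. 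If instead (ii) is violated, $B(x_i, t s_i)$ itself is a $t s_{i_k}$-ball containing a witness in $B(x_{i_k}, t s_{i_k}) \subset B(x_{i_k}, s_{i_k})$, and the same conclusion follows. Hence $U \subset \bigcup_j Blu_j$, and the bound $\mu U \le (K+1) \mu V$, with $V = \bigcup_j B(x_{i_j}, t s_{i_j})$, is obtained by decomposing $\bigcup_j Blu_j = V \cup (\bigcup_j Blu_j \setminus V)$ and controlling the collar $\bigcup_j Blu_j \setminus V$ by $K \mu V$ via microblossoming combined with the scale-by-scale disjointness produced by the selection.

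For (\ref{bound}), fix $x$ with $J(x) = \{j : x \in B(x_{i_j}, s_{i_j})\}$; by (i) all $j \in J(x)$ share a single radius $r_n$, and by (ii) the reduced balls $B(x_{i_j}, t r_n)$ for $j \in J(x)$ are pairwise disjoint, each containing the center $x_{i_j} \in B(x, r_n)$. These reduced balls therefore lie inside $Blu(x, r_n, t r_n)$, and microblossoming yields $\sum_{j \in J(x)} \mu B(x_{i_j}, t r_n) \le K \mu B(x, r_n)$. Local comparability $\mu B(x_{i_j}, r_n) \ge \mu B(x, r_n)/C(\mu)$ converts the sum into at most $C(\mu) K \le C(\mu) K + 1$, as required.

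The main obstacle is completing the collar estimate for (\ref{set}): the straightforward bound $\sum_j \mu Blu_j \le K \sum_j \mu B(x_{i_j}, s_{i_j})$ is weaker than what one needs, and since no direct doubling comparison between $\mu B(x_{i_j}, s_{i_j})$ and $\mu B(x_{i_j}, t s_{i_j})$ is available from the hypotheses, one has to perform a careful scale-by-scale telescoping that exploits both the lacunary gap $T t \ge 1$ and the disjointness of the selected reduced balls in order to extract the clean constant $(K+1)$.
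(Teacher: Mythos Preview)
Your selection rule is genuinely different from the paper's Stein--Str\"omberg algorithm, and while your argument for \eqref{bound} is correct (indeed it gives the slightly better constant $C(\mu)K$), the selection you propose does \emph{not} yield a subcollection satisfying \eqref{set}. The gap you flag as ``the main obstacle'' is not a technicality that telescoping can repair; your rule (i) discards too much.

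Here is a concrete failure. Take $\mathbb{R}^d$ with Lebesgue measure and the $\ell_\infty$ norm, $t=1/d$, $T=d$, $R=\{d^n:n\in\mathbb{Z}\}$; then $C(\mu)=1$ and a microblossoming constant is $K=(1+2/d)^d\le e^2$. Form the collection consisting of the single ball $B(0,d)$ together with all balls $B(x_i,1)$ with $x_i\in \tfrac{2}{d}\mathbb{Z}^d\cap B(0,d)$. Your algorithm selects $B(0,d)$ and then rejects every $B(x_i,1)$ by rule (i), since each meets $B(0,d)$. Hence $V=B(0,1)$ with $\mu V=2^d$, whereas the reduced balls $B(x_i,1/d)$ tile essentially all of $B(0,d)$, so $\mu U\ge (2d)^d-o(1)$. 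Thus $\mu U/\mu V\ge d^d$, and \eqref{set} with constant $K+1\le e^2+1$ fails badly.

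The paper avoids this by using a \emph{weighted} acceptance criterion: $B(x_{i_{k}+1},s_{i_{k}+1})$ is accepted whenever
\[
\sum_{j=1}^{k}\frac{\mu D_{i_j}}{\mu B(x_{i_j},s_{i_j})}\,\mathbf{1}_{Bl(x_{i_j},s_{i_j},ts_{i_j})}(x_{i_k+1})\le 1.
\]
If a ball is rejected, this sum at its center exceeds $1$, and one deduces $\mathbf{1}_{\cup\mathcal C}\le \sum_j \frac{\mu D_{i_j}}{\mu B(x_{i_j},s_{i_j})}\mathbf{1}_{Blu_j}$ pointwise. Integrating and applying microblossoming gives
\[
\mu(\cup\mathcal C)\le \sum_j \frac{\mu D_{i_j}}{\mu B(x_{i_j},s_{i_j})}\,\mu(Blu_j)\le K\sum_j \mu D_{i_j}=K\,\mu V,
\]
because the weights cancel the factor $\mu B(x_{i_j},s_{i_j})$ coming from the microblossoming bound. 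Your unweighted inclusion $U\subset\cup_j Blu_j$ leads only to $K\sum_j\mu B(x_{i_j},s_{i_j})$, and without a doubling hypothesis there is no way to pass from this to $K\sum_j\mu B(x_{i_j},ts_{i_j})$; the counterexample shows the inequality is simply false for your subcollection. For \eqref{bound} the paper then splits, at a given point $z$, the selected balls containing $z$ into those of radius strictly larger than the smallest (controlled by the acceptance criterion, contributing $1$) and those of equal smallest radius (controlled as in your argument, contributing $C(\mu)K$).
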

 
 \begin{proof} We use the Stein-Str\"omberg selection algorithm. 
 Let $B(x_{i_1}, s_{i_1}) = B(x_{1}, s_{1})$
 and suppose that the balls $B(x_{i_1}, s_{i_1}), \dots ,B(x_{i_k}, s_{i_k})$ have already been selected. If
 $$ \sum_{j=1}^k \frac{\mu D_{i_j}}{\mu B(x_{i_j}, s_{i_j})}\mathbf{1}_{Bl(x_{i_j}, s_{i_j}, t s_{i_j})} (x_{i_{k} + 1}) \le 1,
$$
accept 
$B(x_{i_{k + 1}}, s_{i_{k + 1}}) := B(x_{i_{k} + 1}, s_{i_{k} + 1})$ as the next ball in the subcollection. Otherwise, reject it. Repeat till we run out of balls.
Let $\mathcal{C}$ be the collection of all rejected balls. Then $\mu$ a.e., 
$$
\mathbf{1}_{\cup \mathcal{C}} < \sum_{j=1}^N \frac{\mu D_{i_j}}{\mu B(x_{i_j}, s_{i_j})}\mathbf{1}_{Blu(x_{i_j}, s_{i_j}, t  s_{i_j})}.
$$
Integrating both sides and using microblossoming we conclude that
$\mu \cup \mathcal{C} \le K \sum_i^N \mu D_{i_j} =  K  \mu \cup_{j=1}^N   B(x_{i_j}, t s_{i_j})$,
whence $\mu U  \le (K + 1)  \ \mu \cup_{j=1}^N   B(x_{i_j}, t s_{i_j})$.

Next we show that $$
\sum_{j=1}^N \frac{\mu D_{i_j}}{\mu B(x_{i_j}, s_{i_j})}\mathbf{1}_{B(x_{i_j}, s_{i_j})}
\le C(\mu) \ K + 1.
$$
Suppose $\sum_{j=1}^N \frac{\mu D_{i_j}}{\mu B(x_{i_j}, s_{i_j})}\mathbf{1}_{B(x_{i_j}, s_{i_j})}(z) > 0$.
Let
$\{B(x_{i_{k_1}}, s_{i_{k_1}}),$ $ \dots, B(x_{i_{k_n}}, s_{i_{k_n}})\}$ be the collection of  all balls containing $z$ (keeping the original ordering by decreasing radii). 
Then each $B(x_{i_{k_j}}, s_{i_{k_j}})$
has radius either
equal to or (substantially) larger than $s_{i_{k_n}}$. We separate the contributions of these balls into two sums.  Suppose $B(x_{i_{k_1}}, s_{i_{k_1}}), \dots , B(x_{i_{k_m}}, s_{i_{k_m}})$ all have radii larger than $s_{i_{k_n}}$, while $B(x_{i_{k_{m} + 1}}, s_{i_{k_{m} + 1}}), \dots , B(x_{i_{k_n}}, s_{i_{k_n}})$
have radii equal to $s_{i_{k_{n}}}$. Now for $1 \le j \le m$, by $T$ lacunarity and the fact that $T t \ge 1$, we have
$s_{i_{k_n}} \le t s_{i_{k_j}}$, so $z\in B(x_{i_{k_j}}, s_{i_{k_j}})$ implies that 
$x_{i_{k_n}}\in Bl(x_{i_{k_j}}, s_{i_{k_j}}, t s_{i_{k_j}})$,
whence 
$$ 
\sum_{j=1}^m \frac{\mu D_{i_{k_j}}}{\mu B(x_{i_{k_j}}, s_{i_{k_j}})}
\mathbf{1}_{Bl(x_{i_{k_j}}, s_{i_{k_j}}, t s_{i_{k_j}})} (x_{i_{k_n}}) \le 1,
$$
and thus  
$$ 
\sum_{j=1}^m \frac{\mu D_{i_{k_j}}}{\mu B(x_{i_{k_j}}, s_{i_{k_j}})}
\mathbf{1}_{B(x_{i_{k_j}}, s_{i_{k_j}})}  (z) \le 1.
$$
Next, note that the sets $D_{i_{k_{m} + 1}}, \dots , D_{i_{k_{n}}}$ are all disjoint and contained in 
$Bl(z, s_{i_{k_{n}}}, t s_{i_{k_{n}}})$. By microblossoming and local comparability, for $j = m + 1, \dots, n$ we have
$$
\mu \cup_{j= m+1}^n  D_{i_{k_{j}}}\le \mu Bl(z, s_{i_{k_{n}}}, t s_{i_{k_{n}}}) \le  
K \mu B(z, s_{i_{k_{n}}})\le  K \  C(\mu) \  \mu B(x_{i_{k_{j}}}, s_{i_{k_{n}}}).
$$ 
It follows that 
$$
\sum_{j= m +1}^n \frac{\mu D_{i_{k_{j}}}}{\mu B(x_{i_{k_{j}}},  s_{i_{k_{n}}})}\mathbf{1}_{B(x_{i_{k_{j}}},   
 s_{i_{k_{n}}})} (z)
\le
\frac{ C(\mu) \ \mu Bl(z, s_{i_{k_{n}}}, t s_{i_{k_{n}}})}{\mu B(z, s_{i_{k_{n}}})}
\le C(\mu)  \ K.
 $$
 \end{proof}
 
 Denote by $M_R$ the centered Hardy-Littlewood maximal operator, with the additional restriction 
 that the supremum is taken over radii belonging to the subset $R\subset (0,\infty)$ (cf. \cite[p. 735]{NaTa}).
We mention that under the hypotheses of the next corollary, it is not known whether  
 the centered Hardy-Littlewood maximal operator  $M$ (with no restriction on the radii)  is of weak type (1,1).

\begin{corollary}\label{MR}  Let $(X, d, \mu)$ be a metric measure space, where $\mu$ satisfies a $C(\mu)$ local comparability condition, and let $R:= \{r_n: n\in \mathbb{Z}\}$ be a $T$-lacunary sequence of radii. Suppose there exists a $t > 0$ with $T  t \ge 1$ 
such that 
$\mu$ $(t, K)$-microblossoms boundedly. Then $\|M_R\|_{L^1-L^{1,\infty}} \le (K + 1) \ (C(\mu) \ K + 1)$.
 \end{corollary}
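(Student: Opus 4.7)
\begin{parrafo} \textbf{Proof proposal for Corollary \ref{MR}.}

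The plan is the standard route from a covering theorem to a weak type bound, using Theorem \ref{StSt1} as the covering input. Fix $f \in L^1(\mu)$ and $s > 0$, and set $E_s := \{x : M_R f(x) > s\}$. For each $x \in E_s$, select a radius $r(x) \in R$ with
\[
\mu B(x, r(x)) < \frac{1}{s} \int_{B(x, r(x))} |f|\, d\mu.
\]
In particular $\mu B(x, r(x)) > 0$, and $x \in B(x, t\,r(x))$, so
\[
E_s \subset V := \bigcup_{x \in E_s} B(x, t\,r(x)).
\]
Since $V$ is open, $\tau$-smoothness reduces the problem to bounding $\mu\bigl(\bigcup_{x \in F} B(x, t\,r(x))\bigr)$ uniformly in finite $F \subset E_s$.

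Fix such a finite $F$, and order the collection $\{B(x, r(x)) : x \in F\}$ by non-increasing radii, writing it as $\{B(x_i, s_i) : 1 \le i \le M\}$ with $s_i \in R$. Apply Theorem \ref{StSt1} to obtain a subcollection $\{B(x_{i_j}, s_{i_j})\}_{j=1}^N$ satisfying (\ref{set}) and (\ref{bound}). Since the disjointifications $D_{i_j}$ are pairwise disjoint and $D_{i_j} \subset B(x_{i_j}, t\,s_{i_j})$, combining (\ref{set}) with the choice of $s_i$ gives
\[
\mu\Bigl(\bigcup_{x \in F} B(x, t\,r(x))\Bigr)
\le (K + 1) \sum_{j=1}^N \mu D_{i_j}
= (K + 1) \sum_{j=1}^N \frac{\mu D_{i_j}}{\mu B(x_{i_j}, s_{i_j})} \,\mu B(x_{i_j}, s_{i_j}).
\]
Bounding each $\mu B(x_{i_j}, s_{i_j})$ by $s^{-1}\int_{B(x_{i_j}, s_{i_j})} |f|\, d\mu$ and then using (\ref{bound}) inside the integral yields
\[
\mu\Bigl(\bigcup_{x \in F} B(x, t\,r(x))\Bigr)
\le \frac{K + 1}{s} \int |f|\,\sum_{j=1}^N \frac{\mu D_{i_j}}{\mu B(x_{i_j}, s_{i_j})} \mathbf{1}_{B(x_{i_j}, s_{i_j})}\, d\mu
\le \frac{(K + 1)(C(\mu) K + 1)}{s}\, \|f\|_{L^1(\mu)}.
\]

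Taking the supremum over finite $F$, $\tau$-smoothness gives the same bound for $\mu V$, and hence for $\mu E_s$. This proves the claimed weak type $(1,1)$ estimate. The only genuine care points are the passage to a finite subcollection (handled by $\tau$-smoothness applied to the open cover $V$, rather than to $E_s$ itself, which need not be open) and the verification that the subfamily $\{s_i\} \subset R$ inherits the $T$-lacunary hypothesis required by Theorem \ref{StSt1}; both are routine. There is no substantive obstacle, as the covering theorem was set up precisely to produce the two inequalities whose product yields the constant $(K + 1)(C(\mu) K + 1)$.
\end{parrafo}
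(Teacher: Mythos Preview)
Your proof is correct and follows essentially the same route as the paper: cover the level set by the reduced balls $B(x, t\,r(x))$, use $\tau$-smoothness to pass to a finite subcollection, apply Theorem \ref{StSt1}, and then chain (\ref{set}) with (\ref{bound}) through the choice of the balls. The only differences are cosmetic (supremum over finite $F$ versus an $\varepsilon$ of room), and your added remarks on the care points are accurate.
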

  The proof is standard. We present it  to keep track of the constants.
 
 \begin{proof} Fix $\varepsilon > 0$, let $a > 0$, and let $f\in L^1(\mu)$. 
 For each $x\in \{M_R f > a\}$ select $B(x,r)$ with $r\in R$, such that $a \mu B(x,r) < \int_{B(x,r)} |f|$.
 Then  the collection of ``small" balls $\{ B(x, tr) : x\in  \{M_R f > a\}\}$ is a cover of $\{M_R f > a\}$.
 By the $\tau$-smoothness of $\mu$, there is a finite subcollection 
 $\{B(x_i, t s_i): s_i \in R, 1 \le i \le M\}$ 
of balls with positive measure, ordered by non-increasing radii,  such that
$$
(1 - \varepsilon) \mu \{M_R f > a\} 
\le
(1 - \varepsilon) \mu \cup \{ B(x, tr) : x\in  \{M_R f > a\}\}
< 
\mu \cup_{i= 1}^M B(x_i, t s_i).
$$

Next, let $\{B(x_{i_1}, s_{i_1}), \dots, B(x_{i_N}, s_{i_N})\}$ be the subcollection
given by  the Stein-Str\"omberg covering theorem for sparse radii. Then we have
$$
\mu \cup_{i= 1}^M B(x_i, t s_i)
\le 
(K + 1) \mu \cup_{j=1}^N   B(x_{i_j}, t s_{i_j})
=
(K + 1) \sum_{j=1}^N \mu D_{i_j} 
$$
$$
=
(K + 1) \sum_{j=1}^N \frac{\mu D_{i_j} }{\mu B(x_{i_j}, s_{i_j})}\int \mathbf{1}_{B(x_{i_j}, s_{i_j})}
\le
(K + 1)  \frac{1}{a }\int  |f| \sum_{j=1}^N \frac{\mu D_{i_j} }{\mu B(x_{i_j}, s_{i_j})}\mathbf{1}_{B(x_{i_j}, s_{i_j})}
$$
$$\le (K + 1) \ (C(\mu) \ K + 1) \frac{1}{a }\int  |f| .
$$
 \end{proof}
 
 In the specific case of $d$-dimensional Lebesgue measure $\lambda^d$,  $C(\lambda^d) = 1$.
Choosing
 $t = 1/d$
 and $T = d$,
 $K$ above can be taken to be $e^2$, so the constant obtained
 is $(e^2+1)^2$, which is worse than the constant $(e^2 + 1) (e + 1)$ yielded by the
Stein-Str\"omberg argument. This discrepancy is due
to the fact  that our definition of microblossoming
uses the uncentered blossom instead of the blossom, so from the assumption $\mu (Blu \left(x, r,  t r \right)) \le K \mu B(x,r)$
we  get the same bound $\mu (Bl \left(x, r,  t r \right)) \le K \mu B(x,r)$  for the potentially smaller
centered blossom. Of course, we could strengthen the definition, using blossoms,  to obtain the same 
constant as in the Stein-Str\"omberg proof, but in the case of Lebesgue measure
we prefer to consider it separately, using different values of $(t, K)$ to lower the known bounds. We do this in the next section.

  While Corollary \ref{MR}  follows from the
  proof of the Stein-Str\"omberg covering theorem, it was not stated there but in \cite[Lemma 4]{MeSo} for Lebesgue
measure,
  and in the microdoubling case, in \cite[Corollary 1.2]{NaTa}. A source of  interest for this result comes from the fact that under
$(t,K)$-microblossoming, the maximal operator defined by a $(1 +  t)$-lacunary set of radii $R$ 
is controlled by the sum of $N$ maximal operators with lacunarity $1/t$, where $N$ is the
least integer such that $(1 + t)^N \ge 1/t$. Thus, the bound 
$\|M_R\|_{L^1-L^{1,\infty}} \le  N (K + 1) \ (C(\mu) \ K + 1)$ follows. Under the
additional assumption of $(t, K^{1/2})$-microdoubling, the maximal operator defined by
taking suprema of radii in $[a, (1 + t) a)$ is controlled by $ K^{1/2}$ times the averaging operator 
of radius $(1 + t) a$.
Putting these estimates together, and using the better bound for 
$\mu Bl(x, r, tr)\le  K^{1/2} \mu B(x,R)$, the following result due to  Naor and Tao (cf. \cite[Corollary 1.2]{NaTa})  is obtained. 
Of course, in this case $\mu$ is doubling and $X$, geometrically doubling.

\begin{corollary}  Let $(X, d, \mu)$ be a metric measure space, where $\mu$ satisfies a $C(\mu)$ local comparability condition
and is $(t, K^{1/2})$-microdoubling.
If $N$ is the
least integer such that $(1 + t)^N \ge 1/t$, then
$$\|M\|_{L^1-L^{1,\infty}} \le N \ K^{1/2} \ (K + 1) \ (C(\mu) \ K^{1/2} + 1).$$
 \end{corollary}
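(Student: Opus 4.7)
The plan is to follow the three-step reduction sketched by the author in the paragraph preceding the statement. First, I would use $(t, K^{1/2})$-microdoubling to dominate the unrestricted maximal operator $M$ by $M_R$, where $R := \{(1+t)^n : n \in \mathbb{Z}\}$ is a $(1+t)$-lacunary scale. Any $r > 0$ lies in a unique interval $[(1+t)^{n-1}, (1+t)^n)$, and two applications of microdoubling give $\mu B(x, (1+t)^n) \le K^{1/2} \mu B(x, (1+t)^{n-1}) \le K^{1/2} \mu B(x, r)$, so the average of $|f|$ over $B(x,r)$ is at most $K^{1/2}$ times the average over $B(x, (1+t)^n)$. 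Taking suprema yields $Mf \le K^{1/2} M_R f$ pointwise, hence $\|M\|_{L^1 \to L^{1,\infty}} \le K^{1/2} \|M_R\|_{L^1 \to L^{1,\infty}}$.

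Second, since $(1+t)^N \ge 1/t$ by definition of $N$, I would partition $R$ into $N$ subsequences $R_k := \{(1+t)^{nN+k} : n \in \mathbb{Z}\}$, $k = 0, \dots, N-1$. Each $R_k$ is $T$-lacunary with $T := (1+t)^N \ge 1/t$, so $Tt \ge 1$, precisely the gap hypothesis of Theorem \ref{StSt1}. From $M_R f = \max_k M_{R_k} f$ and a union bound on $\{M_R f > s\}$, one deduces $\|M_R\|_{L^1 \to L^{1,\infty}} \le N \max_k \|M_{R_k}\|_{L^1 \to L^{1,\infty}}$, reducing the problem to a single lacunary maximal operator.

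Third, I would re-examine the Stein-Str\"omberg selection argument of Theorem \ref{StSt1} and Corollary \ref{MR}, using the two distinct blossom estimates that microdoubling supplies. For the centered blossom, the triangle inequality gives $Bl(x, r, tr) \subset B(x, (1+t)r)$, so microdoubling yields $\mu Bl(x, r, tr) \le K^{1/2} \mu B(x, r)$; this sharper constant enters the packing inequality (\ref{bound}), producing the factor $C(\mu) K^{1/2} + 1$. For the uncentered blossom, two triangle inequalities give $Blu(x, r, tr) \subset B(x, (1+2t)r) \subset B(x, (1+t)^2 r)$, and two iterations of microdoubling then yield $\mu Blu(x, r, tr) \le K \mu B(x, r)$; this is the bound used to cover the rejected family in (\ref{set}), producing the factor $K+1$. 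Rerunning the proof of Corollary \ref{MR} with these two separate constants in their respective places gives $\|M_{R_k}\|_{L^1 \to L^{1,\infty}} \le (K+1)(C(\mu) K^{1/2} + 1)$. Multiplying the three estimates yields exactly $N K^{1/2} (K+1)(C(\mu) K^{1/2} + 1)$. The only delicate point is to confirm that the selection procedure of Theorem \ref{StSt1} still goes through cleanly with two different blossom constants; since the centered bound is used only in the packing sum and the uncentered bound only in controlling $\mu \cup \mathcal{C}$, the separation is painless and the rest of the argument is routine bookkeeping.
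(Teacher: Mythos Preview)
Your proposal is correct and follows precisely the three-step route the paper sketches in the paragraph preceding the corollary: dominate $M$ by $K^{1/2}M_R$ via one application of $(t,K^{1/2})$-microdoubling, split $R$ into $N$ subsequences each satisfying the $Tt\ge 1$ hypothesis of Theorem~\ref{StSt1}, and then rerun the Stein--Str\"omberg selection with the sharper centered-blossom constant $K^{1/2}$ in the packing estimate~(\ref{bound}) while retaining the uncentered-blossom constant $K$ in the covering estimate~(\ref{set}). Your observation that the two blossom bounds enter in disjoint parts of the argument is exactly the point the paper alludes to with the phrase ``using the better bound for $\mu\, Bl(x,r,tr)\le K^{1/2}\mu B(x,r)$.''
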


This shows that the Stein-Str\"omberg covering theorem for sparse radii in metric spaces suffices to prove
the Naor-Tao bounds, but no greater generality is achieved in either the spaces or the
measures, since microdoubling 
is used in the
last step. A second approach, which yields a slightly more general version of the result  and gives 
 better constants, consists
 in going
back to the original Stein-Str\"omberg argument. Recall that when defining   $(t,K_1)$-microblossoming, 
we set $0 < t <  1$ and  $K_1\ge 1$. In the proof of the next result $K:= \max\{K_1, e\}$ 
 is used to determine the size of the
steps. For convenience we take $K \ge e$, but $e$
is just one possible choice.
 Note that the condition on $ mri(r, T)$ below entails that $\mu$ is doubling on its
support, and hence   $supp(\mu)$ is geometrically doubling. 

\begin{theorem}\label{StSt2} {\bf Stein-Str\"omberg covering theorem for bounded radii.} Let $(X, d, \mu)$ be a metric measure space
such that  $\mu$ satisfies a $C(\mu)$ local comparability condition, and
 is  $(t,K_1)$-microblossoming. Set $K = \max\{K_1 , e\}$.  Let $ r > 0$, and suppose there exists a  $T > 1$ such that
$K_2:= mri(r,T) <\infty$.
Let $\{B(x_i, s_i):  r \le s_i <  T r, 1 \le i \le M\}$ be a finite collection
of balls with positive measure, given in any order, and 
let $D_1 = B(x_1, t s_1), \dots,  D_{M} = B(x_{M}, t s_{M})\setminus \cup_1^{M-1} B(x_{i}, t s_{i})$ be 
the disjointifications of the $t$-reduced balls. Then 
 \begin{equation}\label{sum}
\sum_{i=1}^M\frac{\mu D_i}{\mu B(x_{i}, s_{i})}\mathbf{1}_{B(x_{i}, s_{i})}
\le C(\mu) \ K_1 K \left(2 + \frac{\log K_2}{\log K}\right).
 \end{equation}
 \end{theorem}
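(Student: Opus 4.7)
The plan is to follow the Stein-Str\"omberg scale-partition strategy, but to stratify the indices by the measure-profile $R\mapsto \mu B(z,R)$ rather than directly by the radii $s_i$; this is the natural choice, since local comparability compares measures at \emph{equal} radii, while microblossoming bounds a volume. Fix $z\in supp(\mu)$ at which the left-hand side of (\ref{sum}) is positive, and set $I_z:=\{i:z\in B(x_i,s_i)\}$. Because $z\in supp(\mu)$ one has $\mu B(z,r)>0$, and the hypotheses $s_i\in[r,Tr)$ together with $mri(r,T)\le K_2$ yield
\[
\mu B(z,r)\;\le\;\mu B(z,s_i)\;\le\;\mu B(z,Tr)\;\le\;K_2\,\mu B(z,r)
\]
for every $i\in I_z$.

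Partition $I_z=\bigsqcup_{k\ge 0}S_k$, where $S_k:=\{i\in I_z:K^k\mu B(z,r)\le\mu B(z,s_i)<K^{k+1}\mu B(z,r)\}$; by the chain above only $k\in\{0,1,\dots,\lfloor \log K_2/\log K\rfloor\}$ contribute, so at most $2+\log K_2/\log K$ strata are nonempty. Fix such a stratum and set $s_k^*:=\max_{i\in S_k}s_i$, which exists since the collection is finite. For every $i\in S_k$ the point $x_i$ lies in $B(z,s_i)$, so the choice of witness $w=x_i$ in the definition (\ref{altublossom}) of $Blu$ yields $D_i\subset B(x_i,ts_i)\subset Blu(z,s_i,ts_i)$; keeping the same witness upgrades this by monotonicity of $Blu$ in both arguments to $D_i\subset Blu(z,s_k^*,ts_k^*)$. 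Disjointness of the $D_i$ and $(t,K_1)$-microblossoming then produce
\[
\sum_{i\in S_k}\mu D_i\;\le\;\mu\, Blu(z,s_k^*,ts_k^*)\;\le\;K_1\,\mu B(z,s_k^*)\;\le\;K_1 K^{k+1}\mu B(z,r),
\]
while $d(z,x_i)<s_i$ and local comparability give $\mu B(x_i,s_i)\ge \mu B(z,s_i)/C(\mu)\ge K^k\mu B(z,r)/C(\mu)$. Consequently the contribution of $S_k$ to the value of the left-hand side of (\ref{sum}) at $z$ is bounded above by $C(\mu)\,K_1\,K$.

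Summing this per-stratum bound over the at most $2+\log K_2/\log K$ nonempty strata gives exactly (\ref{sum}). The only delicate step is the chain of inclusions for $Blu$: the containment $B(x_i,ts_i)\subset Blu(z,s_i,ts_i)$ is immediate from (\ref{altublossom}) by taking $w=x_i$, and the monotonicity $Blu(z,s,ts)\subset Blu(z,s',ts')$ for $s\le s'$ is obtained by keeping the same witness $w$. The role of $\tau$-smoothness is to make the complement of $supp(\mu)$ $\mu$-null, so it suffices to control the sum pointwise on $supp(\mu)$, where $\mu B(z,r)>0$ makes the stratification well defined. Note that neither doubling of $\mu$ nor geometric doubling of $X$ is invoked; the argument rests only on local comparability, microblossoming at scale $t$, and the single finite ratio $K_2$.
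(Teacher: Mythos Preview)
Your proof is correct and follows the same overall Stein--Str\"omberg scale-partition strategy as the paper: decompose the indices $i$ with $z\in B(x_i,s_i)$ into roughly $\log K_2/\log K$ scales, and bound the contribution of each scale by $C(\mu)K_1K$ via microblossoming plus local comparability.

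The one noteworthy difference is in how the scales are defined. The paper starts from $s=\min\{s_i: y\in B(x_i,s_i)\}$ and constructs an adaptive sequence of radii $s\prod_{j=1}^m(1+h_j)$, choosing each $h_j$ maximal subject to $\mu B(y,\cdot)$ growing by at most a factor $K$; the balls are then grouped by which radius interval they fall into. You bypass this construction entirely by stratifying the indices directly according to the value of $\mu B(z,s_i)$ in dyadic-in-$K$ bins above $\mu B(z,r)$. This is a genuine simplification: it removes the inductive definition of the $h_j$ and the attendant case analysis (whether $(1+h_j)s$ hits $Tr$ or not, open versus closed balls), treats all scales uniformly, and makes transparent that the number of scales is governed by the single ratio $K_2$. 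The paper's version, in exchange, singles out the lowest scale (radii exactly equal to $s$) and obtains the slightly smaller per-scale bound $C(\mu)K_1$ there; but since both arguments then pass to the common upper bound $C(\mu)K_1K(2+\log K_2/\log K)$, nothing is lost. Your explicit remark that the argument only needs to hold on $\operatorname{supp}(\mu)$ (where $\mu B(z,r)>0$) is also a point the paper leaves implicit, since $mri$ is only defined there.
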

 
 Since the big $d\log d$ part in the estimates for the maximal operator (in $\mathbb{R}^d$ with Lebesgue measure) 
comes
 from this case, which does not require any particular ordering nor any  choice  of balls, it is natural
 to enquire whether some additional selection process can lead to an improvement
 in the bounds. In general metric spaces this cannot be done, by \cite[Theorem 1.4]{NaTa}, 
but it might be possible in $\mathbb{R}^d$.  However,  I have not been able to find such a new selection argument.

 In the statement
above, $T$ is not assumed to be close to 1, and in fact it could be much larger than 2
(recall Example \ref{macrodoub}). 
 From the viewpoint of the
proof, the difference between $T >> 2$ and the assumption of $t$-microdoubling lies in the fact that the size of the steps will vary depending on the growth of balls,
rather than having increments given by the constant factor $1 + t$ at every step. But the total number of steps will
be determined by $K$ and  $K_2$, not by whether the factors are all equal to $1 + t$ or not.
 
 \begin{proof} Suppose 
 $$\sum_{i=1}^M\frac{\mu D_i}{\mu B(x_{i}, s_{i})}\mathbf{1}_{B(x_{i}, s_{i})} (y) > 0.
 $$
 Let $s = \min \{s_i: 1 \le i \le M \mbox{ and } y\in B(x_{i}, s_{i})\}$.  Then
 $r \le s < Tr$. Select 
 $$h_1 = \sup \{h > 0 : \mu B(y , (1 + h) s) 
 \le K \mu B^{cl} (y ,s) \mbox{ \ and \ }   (1 + h) s \le  T r\}.
 $$
 This is always possible since $ \lim_{h\downarrow 0}\mu B(y , (1 + h) s) 
 = \mu B^{cl} (y ,s) $.
Now either $(1 + h_1) s =  T r$, in which case the process finishes in one step, and then it could happen
 that
 $\mu B^{cl}(y , (1 + h_1) s) 
 < K \mu B^{cl} (y ,s)$, or $(1 + h_1) s <  T r$, in which case  $\mu B(y , (1 + h_1) s) 
 \le K \mu B^{cl} (y ,s) \le \mu B^{cl}(y , (1 + h_1) s)$  (the last inequality must hold, since otherwise we would be able to select a larger
 value for $h_1$).

 If $h_2, \dots, h_m$ have been chosen, let 
$$h_{m + 1}
 :=  
 \sup \{h > 0 : \mu B(y , s (1 + h)\Pi_{i=1}^m (1 + h_i) \le K \mu B^{cl} (y , s \Pi_{i=1}^m (1 + h_i) )
 \mbox{ \ and \ } 
 $$
 $$
s  (1 + h)\Pi_{i=1}^m (1 + h_i)  \le T r\}.$$
 Since $\mu  B (y , Tr)  < \infty$, the process stops after a finite number of steps, so
  there is an $N \ge 0$ (assigning value 1 to the empty product) such that $ s \Pi_{i=1}^{N + 1} (1 + h_i) = T r$ and
 $$
 \mu  B^{cl} (y , s \Pi_{i=1}^N (1 + h_i) ) \le \mu  B (y , Tr) \le K \mu  B^{cl} (y , s \Pi_{i=1}^N (1 + h_i) ).
 $$
 To estimate $N$, note that since $r \le s$, 
 $$
 \mu B (y , T r) \le K_2 \mu  B (y , s) \le \frac{K_2}{K}   \mu  B^{cl} (y ,(1 + h_1) s) 
 $$
 $$
 \le \dots
 \le   \frac{K_2}{K^N} \mu  B^{cl} (y , s \Pi_{i=1}^N (1 + h_i) ) 
  \le   \frac{K_2}{K^N} \mu  B (y , T r). 
  $$
  Hence $K^N \le K_2$ and thus $N\le \log K_2/ \log K$.

The remaining part of the argument is a variant of what was done in   
Stein-Str\"omberg  for sparse radii, when considering the contribution of balls with the
same radius as the smallest ball. Here we arrange the balls containing $y$  into $N + 2$ ``scales"
(instead of just one) 
depending on whether their radii $R$ are equal to $s$, or $s \Pi_{i=1}^m (1 + h_i) < R \le s \Pi_{i=1}^{m +1} (1 + h_i) $,
or $ s \Pi_{i=1}^N (1 + h_i)  < R \le T r$.

For the first scale, consider all balls 
$B(x_{i_{1,1}}, s), \dots, B(x_{i_{1,k_1}}, s)$ containing $y$. Since for $1\le j \le k_1$, 
$x_{i_{1,j}}  \in B(y, s)$, it follows that the disjoint sets $D_{i_{1,j}}$  are all contained in 
  $Bl(y, s, t s).$
By microblossoming and local comparability we have, for $j = 1, \dots, k_1$,
$$
\mu \cup_{j= 1}^{k_1}  D_{i_{1,j}}\le \mu Bl(y, s, t s) \le  
K_1 \mu B(y, s)\le  K_1 \  C(\mu) \  \mu B(x_{i_{1,j}}, s),
$$ 
so
$$
\sum_{j= 1}^{k_1} \frac{\mu D_{i_{1,j}}}{\mu B(x_{i_{1,j}}, s_{i_{1,j}})}\mathbf{1}_{B(x_{i_{1,j}}, s_{i_{1,j}})} (y)
\le
\frac{ C(\mu) \ \mu Bl(y, s, t s)}{\mu B(y, s)}
\le C(\mu)  \ K_1.
 $$
 
 The contributions of all the other scales are estimated
in the same way as the second one, which is presented next.
Again, consider all balls 
$B(x_{i_{2,1}}, s_{i_{2,1}}), \dots, B(x_{i_{2,k_2}}, s_{i_{2,k_2}})$ containing $y$ and
with radii $s_{i_{2,j}}$ in the interval 
 $(s, (1 + h_1) s]$. Then all the sets  $D_{i_{2,j}}$  are contained in 
  $$
  Bl(y, (1 + h_1) s, t (1 + h_1) s).
  $$  
  Using microblossoming, the choice
of $h_1$,   and 
the local comparability of $\mu$, for $j = 1, \dots, k_2$ we have
 \begin{equation}\label{minus}
\mu \cup_{j= 1}^{k_2}  D_{i_{2,j}}
\le \mu Bl(y, (1 + h_1) s, t (1 + h_1) s) 
  \end{equation}
$$
\le  
K_1 \mu B (y, (1 + h_1) s)
\le  
K_1 K  \mu B^{cl}(y, s)
\le 
K_1 K \  C(\mu) \  \mu B(x_{i_{2,j}}, s_{i_{2,j}}),
$$ 
so
$$
\sum_{j= 1}^{k_2}  \frac{\mu D_{i_{2,j}}}{\mu B(x_{i_{2,j}}, s_{i_{2,j}})}\mathbf{1}_{B(x_{i_{2,j}}, s_{i_{2,j}})} (y)
\le
\frac{ C(\mu) \  \mu Bl(y, (1 + h_1) s, t (1 + h_1) s)}{\mu B^{cl}(y, s)}
\le C(\mu)  \ K_1 K.
 $$
 Adding up over the $N + 2$ scales  we get (\ref{sum}).
  \end{proof}
  
Next we put together the two parts of the Stein-Str\"omberg covering theorem. This helps
to see why the original argument gives  better bounds than domination by several
sparse operators.

\begin{theorem}\label{StSt3} {\bf Stein-Str\"omberg covering theorem.} Let $(X, d, \mu)$ be a metric measure space, where $\mu$ satisfies a $C(\mu)$ local comparability condition, and
 is  $(t,K_1)$-microblossoming. Set $K = \max\{K_1 , e\}$, and suppose 
$K_2:= \sup_{r > 0} mri(r,1/t) <\infty$.
  Let $\{B(x_i, s_i): s_i \in R, 1 \le i \le M\}$ be  a finite collection
of balls with positive measure, ordered by non-increasing radii, and let $U:= \cup_{i = 1}^M   B(x_i, t s_i)$. Then there exists a
subcollection 
$\{B(x_{i_1}, s_{i_1}), \dots, B(x_{i_N}, s_{i_N})\},
$
 such that,   
 denoting by $D_{i_1} = B(x_{i_1}, t s_{i_1}), \dots,  D_{i_N} = B(x_{i_N}, t s_{i_N})
 \setminus \cup_1^{N-1} B(x_{i_j}, t s_{i_j})$, we have
\begin{equation}\label{set2}
\mu U \le (K_1 + 1) \mu \cup_{j=1}^N   B(x_{i_j}, t s_{i_j}),
\end{equation} and
\begin{equation} \label{bound2}
\sum_{j=1}^N \frac{\mu D_{i_j}}{\mu B(x_{i_j}, s_{i_j})}\mathbf{1}_{B(x_{i_j}, s_{i_j})}
\le 1 +  C(\mu) \ K_1 K \left(2 + \frac{\log K_2}{\log K}\right).
\end{equation}
 \end{theorem}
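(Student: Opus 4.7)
The strategy is to run the Stein--Str\"omberg selection algorithm of Theorem \ref{StSt1} on the given collection, and then to analyse the resulting pointwise sum by combining the two mechanisms already used in Theorems \ref{StSt1} and \ref{StSt2}. I would apply the selection procedure verbatim, producing the subcollection $\{B(x_{i_j}, s_{i_j})\}_{j=1}^N$. The covering estimate (\ref{set2}) then follows directly from the first half of the proof of Theorem \ref{StSt1}, since that step relies only on $(t, K_1)$-microblossoming and the decreasing-radii ordering, and never on lacunarity.

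For the pointwise bound (\ref{bound2}), fix $y$ at which the sum is positive, list the selected balls containing $y$ by decreasing radii as $B(x_{i_{k_1}}, s_{i_{k_1}}), \dots, B(x_{i_{k_n}}, s_{i_{k_n}})$, and set $s := s_{i_{k_n}}$. The two regimes that Theorems \ref{StSt1} and \ref{StSt2} are tailored for correspond to splitting at the threshold $s/t$: a \emph{far group} of indices with $s_{i_{k_j}} \ge s/t$, and a \emph{near group} with $s \le s_{i_{k_j}} < s/t$.

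The far group is controlled by the acceptance condition the algorithm verified at the moment it selected $B(x_{i_{k_n}}, s)$. The relevant geometric observation is that when $s_{i_{k_j}} \ge s/t$ one has $x_{i_{k_n}} \in B(y, s) \subset B(y, t s_{i_{k_j}})$ together with $y \in B(x_{i_{k_j}}, s_{i_{k_j}})$, which places $x_{i_{k_n}}$ in $Bl(x_{i_{k_j}}, s_{i_{k_j}}, t s_{i_{k_j}})$. Hence the acceptance inequality evaluated at $x_{i_{k_n}}$ dominates the far-group contribution at $y$, bounding it by $1$. For the near group, every radius lies in $[s, s/t)$, and the hypothesis $K_2 = \sup_{r} mri(r, 1/t)$ gives $mri(s, 1/t) \le K_2$, so Theorem \ref{StSt2} applied with $r := s$ and $T := 1/t$ yields a contribution of at most $C(\mu)\, K_1 K\,(2 + \log K_2 / \log K)$.

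The step I anticipate as requiring the most care is the compatibility of disjointifications in the near-group estimate: in (\ref{bound2}) the $D_{i_j}$ are defined by subtracting the $t$-reduced balls of \emph{every} earlier selected ball, whereas Theorem \ref{StSt2} applied to the near sub-collection alone produces residual sets $D'_j$ that are larger because fewer sets are subtracted. Since $\mu D_{i_j} \le \mu D'_j$, the bound of Theorem \ref{StSt2} transfers in the desired direction; making this monotonicity explicit is what permits the far- and near-group estimates to be added cleanly, producing the advertised constant $1 + C(\mu)\, K_1 K\,(2 + \log K_2 / \log K)$.
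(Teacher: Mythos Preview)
Your proposal is correct and follows essentially the same approach as the paper. The paper's own proof is a three-sentence sketch: run the selection algorithm of Theorem~\ref{StSt1} to get the subcollection and (\ref{set2}); then the constant $1$ in (\ref{bound2}) comes from the large-radii balls exactly as in (\ref{bound}), while the remaining term is the bound (\ref{sum}) of Theorem~\ref{StSt2}. You have made explicit two points the paper leaves implicit---that the correct splitting threshold in the non-lacunary setting is $s/t$, and that the disjointifications in (\ref{bound2}) are monotone in the right direction when passing to the near-group subcollection---but the structure of the argument is the same.
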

 
  \begin{proof} The selection process is the same as in the proof of Theorem \ref{StSt1},
  yielding the desired subcollection,  with (\ref{set2}) being the same as (\ref{set}). 
  As for the right hand side of (\ref{bound2}) the 1 comes from the contribution of balls
  with very large radii, as in (\ref{bound}), while 
  $C(\mu) \ K_1 K \left(2 + \frac{\log K_2}{\log K}\right)$
  is the bound from (\ref{sum}).
 \end{proof} 

The same argument given for Corollary \ref{MR} now yields
  
 \begin{corollary}\label{M} Under the assumptions and with the notation of the preceding result, 
the centered maximal function satisfies the weak type (1,1) bound  
$$\|M\|_{L^1-L^{1,\infty}} \le (K_1 + 1) \left( 1 +  C(\mu) \ K_1 K \left(2 + \frac{\log K_2}{\log K}\right)\right).$$
 \end{corollary}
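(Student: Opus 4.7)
The plan is to mimic the proof of Corollary \ref{MR} line by line, but substituting Theorem \ref{StSt3} for Theorem \ref{StSt1} as the covering lemma. Concretely, I would fix $\varepsilon>0$, $a>0$, and $f\in L^1(\mu)$, and for each $x\in\{Mf>a\}$ select a radius $r_x>0$ so that $a\mu B(x,r_x)<\int_{B(x,r_x)}|f|\,d\mu$. The family of \emph{$t$-reduced} balls $\{B(x,tr_x):x\in\{Mf>a\}\}$ covers the level set (since each $B(x,tr_x)$ contains $x$), and $\tau$-smoothness of $\mu$ lets me replace it by a finite subfamily $\{B(x_i,ts_i):1\le i\le M\}$ of balls of positive measure, ordered by non-increasing radii, satisfying
\[
(1-\varepsilon)\,\mu\{Mf>a\}\;<\;\mu\bigcup_{i=1}^{M}B(x_i,ts_i).
\]

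Next, I feed this finite ordered collection into Theorem \ref{StSt3}, obtaining a subcollection $\{B(x_{i_j},s_{i_j})\}_{j=1}^{N}$ for which (\ref{set2}) and (\ref{bound2}) hold. The set inequality (\ref{set2}) yields
\[
\mu\bigcup_{i=1}^{M}B(x_i,ts_i)\;\le\;(K_1+1)\,\mu\bigcup_{j=1}^{N}B(x_{i_j},ts_{i_j})\;=\;(K_1+1)\sum_{j=1}^{N}\mu D_{i_j},
\]
using that the $D_{i_j}$ are pairwise disjoint by construction.

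Now I rewrite each $\mu D_{i_j}$ as $\frac{\mu D_{i_j}}{\mu B(x_{i_j},s_{i_j})}\int \mathbf{1}_{B(x_{i_j},s_{i_j})}\,d\mu$, apply the defining inequality $\mu B(x_{i_j},s_{i_j})<\frac{1}{a}\int_{B(x_{i_j},s_{i_j})}|f|\,d\mu$, and pull the supremum inside to obtain
\[
\sum_{j=1}^{N}\mu D_{i_j}\;\le\;\frac{1}{a}\int |f|\sum_{j=1}^{N}\frac{\mu D_{i_j}}{\mu B(x_{i_j},s_{i_j})}\mathbf{1}_{B(x_{i_j},s_{i_j})}\,d\mu.
\]
The pointwise bound (\ref{bound2}) then yields the factor $1+C(\mu)K_1K(2+\log K_2/\log K)$. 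Chaining all the inequalities gives
\[
(1-\varepsilon)\,\mu\{Mf>a\}\;\le\;\frac{(K_1+1)\bigl(1+C(\mu)K_1K(2+\tfrac{\log K_2}{\log K})\bigr)}{a}\,\|f\|_{L^1(\mu)},
\]
and sending $\varepsilon\downarrow 0$ finishes the proof.

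There is no real obstacle here: all the substantive work was done in Theorem \ref{StSt3}, and the only mildly delicate points are the use of $\tau$-smoothness to pass from the potentially uncountable cover to a finite one, and the bookkeeping that turns the pointwise sum bound into an $L^1$ bound via Fubini. Both are standard and already executed in Corollary \ref{MR}.
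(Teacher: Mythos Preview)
Your proposal is correct and matches the paper's approach exactly: the paper simply writes ``The same argument given for Corollary \ref{MR} now yields'' Corollary \ref{M}, and you have reproduced that argument with Theorem \ref{StSt3} in place of Theorem \ref{StSt1}.
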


 For Lebesgue measure on $\mathbb{R}^d$,
with balls defined by an arbitrary norm and $t = d^{-1}$, this is worse (by a factor of $e^2$) than the bound $(1 + e^2) (1 + o(1)) e^2 d \log d$ 
obtained by Stein and Str\"omberg.

Regarding lower bounds,
currently it is known that for the centered maximal function defined using  $\ell^\infty$-balls (cubes) 
 the  numbers  $\|M\|_{L^1-L^{1,\infty}}$ diverge to
infinity (cf. \cite{A}) 
 at a rate at least $O(d^{1/4})$  (cf.  \cite{IaSt}). No information is available for other balls. In particular, the
question   (asked by Stein and Str\"omberg) as to whether or not the constants $\|M\|_{L^1-L^{1,\infty}}$ diverge to
infinity with $d$, for  euclidean balls, remains open.

\section{Sharpening the bounds for Lebesgue measure}

Here we revisit the  original case studied by 
Stein and Str\"omberg, Lebesgue measure  $\lambda^d$ on $\mathbb{R}^d$, with metric
(and hence, with maximal function) defined by an arbitrary norm.
 Since  $\lambda^d$ is $(t, (1 + t)^d)$-microdoubling for
every $t > 0$, values of $t\ne 1/d$ can be used to obtain  improvements on the size of the constants.

\begin{theorem}\label{SSLebesgue}  Consider $\mathbb{R}^d$ with Lebesgue measure
$\lambda^d$ and balls defined by an arbitrary norm. Let $R:= \{r_n: n\in \mathbb{Z}\}$ be a $d$-lacunary sequence of radii, and let $M_R$ be 
the corresponding (sparsified) Hardy-Littlewood
maximal operator. Then $\|M_R\|_{L^1-L^{1,\infty}} \le (e^{1/d} + 1) (1 + 2  e^{1/d})$.
Furthermore,  if the maximal function is defined using the $\ell_\infty$-norm, so balls
are cubes with sides perpendicular to the coordinate axes, then $\|M_R\|_{L^1-L^{1,\infty}} \le 6.$ 
\end{theorem}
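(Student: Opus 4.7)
The approach is to specialize the sparse-radii Stein--Str\"omberg covering theorem, Theorem~\ref{StSt1}, together with Corollary~\ref{MR}, to Lebesgue measure $\lambda^d$ on $(\mathbb{R}^d, \|\cdot\|)$. Three features of this setting will drive the analysis: the local comparability constant is $C(\lambda^d) = 1$; $(\mathbb{R}^d, \|\cdot\|)$ is a length space, so by Theorem~\ref{equiv} the centered blossom is a ball, $Bl(x,r,tr) = B(x,(1+t)r)$, and a direct verification from the length-space property gives $Blu(x,r,tr) = B(x,(1+2t)r)$; and $\lambda^d$ is exactly homogeneous, so $\lambda^d(Bl(x,r,tr))/\lambda^d(B(x,r)) = (1+t)^d$ and $\lambda^d(Blu(x,r,tr))/\lambda^d(B(x,r)) = (1+2t)^d$. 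Hence the centered $t$-microblossoming constant is $(1+t)^d$ and the uncentered one $(1+2t)^d$.

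Taking $T = d$ (the $d$-lacunarity of $R$) and $t$ the smallest value satisfying $Tt \ge 1$, namely $t = 1/d$, and inserting these Lebesgue-specific values into Corollary~\ref{MR} gives the classical bound $((1+2/d)^d + 1)((1+1/d)^d + 1) \le (e^2 + 1)(e + 1)$. To sharpen this to $(e^{1/d} + 1)(1 + 2 e^{1/d})$, I would retrace the two estimates in the proof of Theorem~\ref{StSt1} and refine them using the $d$-lacunarity. Setting $t = e^{1/d} - 1$, which still satisfies $Tt = d(e^{1/d}-1) \ge 1$ by the elementary inequality $e^{1/d} \ge 1 + 1/d$, gives $1 + t = e^{1/d}$. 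The key refinement should be this: a rejected reduced ball $B(x_{i_k + 1}, t s_{i_k + 1})$ charged to an accepted ball $B(x_{i_j}, s_{i_j})$ of \emph{strictly} larger scale $s_{i_j} \ge d\, s_{i_k + 1}$ actually lies inside $B(x_{i_j}, (1 + t + t/d) s_{i_j})$ rather than merely $B(x_{i_j}, (1 + 2t) s_{i_j})$, since the $T = d$ gap trims the effective enlargement. Separating this different-scale contribution from the same-scale contribution in the pointwise inequality $\mathbf{1}_{\cup\mathcal{C}} \le \sum \frac{\mu D_{i_j}}{\mu B(x_{i_j}, s_{i_j})} \mathbf{1}_{\text{enlarged ball}}$, integrating, and then collecting should produce the two factors $(e^{1/d} + 1)$ and $(1 + 2 e^{1/d})$ exactly, since $(1+t) = e^{1/d}$ enters in the dominant term linearly rather than through the $d$-th power.

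For the $\ell_\infty$ case, the uniform bound $6 = \lim_{d\to\infty}(e^{1/d} + 1)(1 + 2e^{1/d})$ should follow from the product structure of cubes, $B(x, r) = \prod_{i=1}^{d}[x_i - r, x_i + r]$: the set-theoretic manipulations and measure estimates in the Stein--Str\"omberg argument factor over coordinates, so the coverage and weighted-sum inequalities reduce to their one-dimensional analogues and the residual $d$-dependence collapses. Concretely, the enlargement from a reduced cube to the blossom cube can be tracked coordinate-by-coordinate, and the $(1+t)^d$ and $(1+2t)^d$ ratios do not degrade the one-dimensional bound, leaving the dimension-free constant $6$.

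The main obstacle will be the scale-by-scale bookkeeping in the coverage estimate: separating the same-scale from the different-scale rejected contributions and verifying that the product collapses to precisely $(e^{1/d} + 1)(1 + 2 e^{1/d})$ rather than to a weaker combination of $(1+t)^d$ factors, because it is this step that converts the $d$-th power $(1+2t)^d$ into the first-order $2e^{1/d}$. For the cubes specialization, the delicate point will be to ensure that the coordinatewise factorization does not smuggle dimension dependence back in through some auxiliary estimate in the selection algorithm.
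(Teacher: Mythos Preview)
Your approach has a genuine gap in the main estimate. The paper does \emph{not} choose $t$ of order $1/d$; its key idea is to take $t = 1/d^2$. With that choice the microdoubling (equivalently, centered-blossom) constant is $(1+1/d^2)^d < e^{1/d}$, and this is where every $e^{1/d}$ in the final bound comes from. Your choice $t = e^{1/d}-1$ gives $(1+t)^d = e$, not $e^{1/d}$, and the ``scale-by-scale refinement'' you sketch cannot repair this: the measure ratios one must control are always $d$-th powers of the radial enlargement factor, so with $t\approx 1/d$ they remain of size $e$ (for instance your different-scale enlargement $(1+t+t/d)^d$ is still $\approx e$). The trade-off in the paper is that with $t=1/d^2$ the hypothesis $Tt\ge 1$ of Theorem~\ref{StSt1} fails ($Tt = d\cdot d^{-2} = 1/d$), so the theorem is not applied as a black box. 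Instead one runs the same selection algorithm and observes that $d^2\, t = 1$: hence, if $B$ is the last selected ball containing a given point $z$, every selected ball of radius $\ge d^2\, r(B)$ still has $x_B$ in its $t$-blossom, and the selection criterion bounds their total contribution by $1$. That leaves exactly \emph{two} remaining lacunary scales, radii $r(B)$ and $d\, r(B)$, each handled by the equal-radius argument from the proof of Theorem~\ref{StSt1} and contributing at most $e^{1/d}$. This yields the factor $1+2e^{1/d}$; together with the coverage factor one gets $(e^{1/d}+1)(1+2e^{1/d})$.

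Your treatment of the $\ell_\infty$ case is also off track. The Stein--Str\"omberg selection algorithm does not factor coordinatewise (the disjointifications $D_{i_j}$ are not product sets), so the ``product structure'' reduction you describe does not go through. The paper's argument is entirely different and much shorter: having established $\|M_R\|\le (e^{1/d}+1)(1+2e^{1/d})$ in every dimension, it invokes the known fact that for cubes the weak type $(1,1)$ norms are nondecreasing in the dimension (\cite[Theorem~2]{AV}) and lets $d\to\infty$, so the right-hand side tends to $2\cdot 3 = 6$.
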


As we noted above, using the original argument from
\cite{StSt} one obtains  $\|M_R\|_{L^1-L^{1,\infty}} \le (e^2 + 1) (e + 1)$.

 \begin{proof} Suppose, for simplicity in the writing, that $r_{n + 1} = d r_n$
(the case  $r_{n + 1} \ge  d r_n$ is proven in the same way).  We  apply the
 Stein Str\"omberg selection process  with $t =1/d^2$ and microdoubling constant $K = (1 + 1/d^2)^{d}
 < e^{1/d}$. 
As before, given $0 \le f \in L^1$ and $a >0$, 
we cover the  level set  $\{M_R f > a\}$  almost completely,  by a finite collection of  ``small" balls
 $\{B(x_i, t s_i): s_i \in R, 1 \le i \le M\}$  ordered by non-increasing radii,
 and such that $a \mu B(x_i,  s_i) < \int_{B(x_i,  s_i)} f$.
 From this collection  we extract a subcollection 
$\{B(x_{i_1}, t s_{i_1}), \dots, B(x_{i_N}, t s_{i_N})\}$
 satisfying 
$$
\mu \cup_{i= 1}^M B(x_i, t s_i)
\le 
(e^{1/d} + 1) \mu \cup_{j=1}^N   B(x_{i_j}, t s_{i_j})
=
(e^{1/d} + 1) \sum_{j=1}^N \mu D_{i_j}. 
$$
Next, we obtain the bound
$$
\sum_{j=1}^N \frac{\mu D_{i_j}}{\mu B(x_{i_j}, s_{i_j})}\mathbf{1}_{B(x_{i_j}, s_{i_j})}
\le 2  e^{1/d} + 1,
$$
by considering $z$ such that
 $\sum_{j=1}^N \frac{\mu D_{i_j}}{\mu B(x_{i_j}, s_{i_j})}\mathbf{1}_{B(x_{i_j}, s_{i_j})}(z) > 0$.
Select the ball $B$   with largest index that contains $z$. Since $B$ belongs to the subcollection
obtained by the Stein-Str\"omberg method, all balls containing $z$ and  with radii
$\ge d^2 r(B)$ (where $r(B)$ denotes the radius of $B$), contribute at most 1 to the sum. 
Next we have to consider two more scales, all the balls with radius $r(B)$,
and all the balls with radius $d r(B)$. By the usual argument (as in the proof of Theorem \ref{StSt1})  each of these scales contributes
at most $e^{1/d}$ to the sum, so $\|M_R\|_{L^1-L^{1,\infty}} \le (e^{1/d} + 1) (1 + 2  e^{1/d})$ follows.
 The result for
 cubes is obtained by letting $d\to\infty$, since in this case it is known that 
 the weak type (1,1) norms increase with the dimension (cf. \cite[Theorem 2]{AV}).
  \end{proof}

\begin{theorem}\label{SS2Lebesgue}  Consider $\mathbb{R}^d$ with Lebesgue measure
$\lambda^d$ and balls defined by an arbitrary norm. If  $\varepsilon > 0$, 
 then  
$\|M\|_{L^1-L^{1,\infty}} \le (2 +  3 \varepsilon) d \log d$  for all $d = d(\varepsilon)$ sufficiently large.
\end{theorem}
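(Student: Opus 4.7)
Since $\mathbb{R}^d$ has the midpoint property, Theorem \ref{equiv} yields $\lambda^d(Bl(x,r,tr)) = (1+t)^d\lambda^d(B(x,r))$, and $\lambda^d$ is $(t,(1+t)^d)$-microdoubling for every $t > 0$, with $C(\lambda^d) = 1$. The plan is to apply the Stein--Str\"omberg selection of Theorem \ref{StSt3} to Lebesgue measure with the reduction parameter $t = t(\varepsilon, d)$ and the step parameter $K = K(\varepsilon, d)$ tuned so that both factors in Corollary \ref{M} approach their asymptotic optima. Fix $\varepsilon > 0$. First I would take $t$ tending to $0$ faster than $1/d$ (for instance $t = 1/d^{1+\delta}$ for a small $\delta = \delta(\varepsilon) > 0$), so that $K_1 := (1+t)^d \to 1$ and hence the Vitali-type factor $K_1 + 1$ approaches $2$. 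Cover $\{Mf > a\}$ almost completely by a finite collection of reduced balls $B(x_i, ts_i)$ satisfying $a\,\mu B(x_i, s_i) < \int_{B(x_i, s_i)} |f|$, ordered by non-increasing radii, and apply the Stein--Str\"omberg selection to extract a subcollection $\{B(x_{i_j}, t s_{i_j})\}$ with $\mu U \le (K_1 + 1)\sum_j \mu D_{i_j}$.

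For each $z$, bound the pointwise sum $\sum_j (\mu D_{i_j}/\mu B(x_{i_j}, s_{i_j}))\,\mathbf{1}_{B(x_{i_j}, s_{i_j})}(z)$ by partitioning the selected balls containing $z$ into three groups, according to their radius relative to the smallest radius $s$ of a selected ball containing $z$: (i) balls with $s_{i_j} \ge s/t$ contribute at most $1$ by the Stein--Str\"omberg selection rule, exactly as in the proof of Theorem \ref{SSLebesgue}; (ii) the ``first scale'' of balls with $s_{i_j} = s$ contributes at most $K_1$ by centered blossoming combined with $C(\lambda^d) = 1$; (iii) balls with radii in $(s, s/t)$ are grouped into scales of geometric ratio $\rho = \rho(\varepsilon, d)$, each contributing at most $(\rho(1+t))^d$ by the blossom argument, for a total of $\log(1/t)/\log \rho$ scales. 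The target bound $(2 + 3\varepsilon) d \log d$ would follow if we could choose $\rho$ so that the total middle-scale contribution is $(1 + O(\varepsilon))\,d\log d$, since $K_1 + 1 \le 2 + O(\varepsilon)$.

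The main obstacle lies in step (iii). A direct minimization of the product $(\rho(1+t))^d \cdot \log(1/t)/\log\rho$ --- essentially $K/\log K$ with $K = \rho^d$ --- attains its minimum $e$ at $K = e$, producing a middle-scale sum of order $e\,d \log(1/t) \sim e\,d \log d$; this only yields an overall bound of order $2e\,d\log d$, no better than what Corollary \ref{M} directly gives. Pushing the leading constant down to $2 + 3\varepsilon$ requires a sharper analysis, exploiting the geometry of Lebesgue measure and the specific structure of the Stein--Str\"omberg selection: one should show that the per-scale contribution is on average much closer to the tight value $(1+t)^d$ than to the worst-case blossom bound $(\rho(1+t))^d$, either by using the selection constraints to control the distribution of selected centers within a scale, or by an adaptive partition of the radius range $(s, s/t)$ into scales whose boundaries are driven by the actual positions of the selected radii rather than a uniform geometric grid.
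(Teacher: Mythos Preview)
Your setup matches the paper's exactly: take $t = d^{-1-\varepsilon}$, so $K_1 = (1+t)^d = 1 + d^{-\varepsilon} + O(d^{-2\varepsilon})$ and the Vitali factor $K_1+1$ tends to $2$; then split the selected balls containing $z$ into the large-radius part (contributing $\le 1$ by the selection rule) and $L\approx (1+\varepsilon)d^{1+\varepsilon}\log d$ scales of ratio $\rho = 1+t$. You also correctly diagnose the obstacle: bounding each scale by the full blossom ratio $(\rho(1+t))^d$ and optimising over $\rho$ only yields a middle-scale sum of order $e\,d\log d$, hence an overall $2e\,d\log d$.

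The gap is that you do not see the one elementary observation that removes the obstacle, and your proposed fixes (using the selection constraints, or an adaptive partition of the radii) are off target. The point is simply that the sets $D_{i_j}$ are \emph{disjoint across all scales}, not just within a single scale: they are disjointifications. Consequently, if the $D$'s at levels $0,\dots,k$ already occupy all of $B(z,(1+t)^{k+1}r_B)$, the $D$'s at level $k+1$, which are contained in $B(z,(1+t)^{k+2}r_B)$, can fill at most the annulus $B(z,(1+t)^{k+2}r_B)\setminus B(z,(1+t)^{k+1}r_B)$. Dividing by the ball measures at that level (which are $\ge (1+t)^{kd}\mu B(z,r_B)$) gives a per-scale contribution of at most $(1+t)^d\bigl((1+t)^d-1\bigr)=d^{-\varepsilon}+O(d^{-2\varepsilon})$, rather than the crude $(1+t)^{2d}\approx 1$. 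An easy linear-programming check (greedily filling the innermost annuli first, since the weights $1/\mu B(x_{i_j},s_{i_j})$ are non-increasing in the scale index) confirms this is indeed the worst case. Summing over $L$ scales now gives $(1+\varepsilon)(1+O(d^{-\varepsilon}))\,d\log d$ for the middle part, and multiplying by $K_1+1 = 2 + O(d^{-\varepsilon})$ yields $(2+3\varepsilon)\,d\log d$ for $d$ large.

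So the sharpening you seek does not require any new information about Lebesgue measure or the selection mechanism beyond what you already use; it comes for free from the definition of the $D_{i_j}$.
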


 The bound from the proof of \cite[Theorem 1]{StSt} is 
 $\|M\|_{L^1-L^{1,\infty}} \le  e^2 (e^2 + 1) (1 + o(1)) d \log d.$

 \begin{proof} Fix $\varepsilon \in (0,1)$.  Since $(1 + d^{-1 - \varepsilon})^d = 1 + d^{ - \varepsilon}
 + O(d^{ -2  \varepsilon})$, it follows that $\lambda^d$ is $(d^{-1  - \varepsilon},  1 + d^{ - \varepsilon}
 + O(d^{ -2  \varepsilon}))$-microdoubling. Note that if a ball $B$ contains the center of a second  ball of
 radius $1$, and the latter ball is contained in $(1 + d^{-1 - \varepsilon}) B$, then the radius $r_B$
 of $B$ must satisfy $r_B \ge  d^{ 1 +  \varepsilon}$. Let  $L$ be any natural number such that 
 $(1 + d^{-1 - \varepsilon})^L \ge  d^{1 +  \varepsilon}$. Taking logarithms
 to estimate $L$, 
 and using $\log(1 + x) > x - x^2$ for $x$ sufficiently close to $0$, we see that it is
 enough, for the preceding inequality to hold,  to choose $L$ satisfying $L (d^{-1 - \varepsilon} - d^{-2 - 2\varepsilon})
 \ge (1 + \varepsilon) \log d$, or, $L \ge (1 + o(d^{-1}))(1 + \varepsilon) d^{1 + \varepsilon} \log d$.  
For the least such integer we will have 
$$
L \le 1 + (1 + o(d^{-1}))(1 + \varepsilon) d^{1 + \varepsilon} \log d.
$$

 Again we  apply the
 Stein Str\"omberg selection process  with $t = d^{-1 - \varepsilon}$,  
  covering a given  level set  $\{M f > a\}$  almost completely (up to a small $\delta > 0$) by a finite collection of  small balls
 $\{B(x_i, t s_i): s_i \in R, 1 \le i \le k\}$  ordered by non-increasing radii,
 and such that $a \mu B(x_i, t s_i) < \int_{B(x_i, t s_i)} |f|$.
 Using the Stein Str\"omberg algorithm,  we extract a subcollection 
$$
\{B(x_{i_1}, t s_{i_1}), \dots, B(x_{i_N}, t s_{i_N})\}
$$
 satisfying 
\begin{equation} \label{SSsum1}
(1 - \delta) \mu \{M f > a\}
\le 
(2 + d^{ - \varepsilon}
 + O(d^{ -2  \varepsilon})) \sum_{j=1}^N \mu D_{i_j},
 \end{equation}
 where the sets $D_{i_j}$ denote the disjointifications determined by the above subcollection.
To sharpen the usual uniform bound for 
\begin{equation*}
\sum_{j=1}^N \frac{\mu D_{i_j}}{\mu B(x_{i_j}, s_{i_j})}\mathbf{1}_{B(x_{i_j}, s_{i_j})},
\end{equation*}
 we use the fact that the  sets $D_i$ are disjoint
across different steps, and not just within the same step. More precisely, 
let $z$ satisfy
\begin{equation} \label{SSsum}
 \sum_{j=1}^N \frac{\mu D_{i_j}}{\mu B(x_{i_j}, s_{i_j})}\mathbf{1}_{B(x_{i_j}, s_{i_j})}(z) > 0.
\end{equation}
Select the ball $B$   with largest index that contains $z$. Since $B$ belongs to the subcollection
obtained by the Stein-Str\"omberg method, all balls containing $z$ and  with radii
$\ge d^{ 1 +  \varepsilon} r(B)$  contribute at most 1 to the sum. 
Next  we consider the first two scales, since for all the others, the argument
is the same as for the second. 

Take  all the  balls with radii equal to  $r_B$. 
In order to bound (\ref{SSsum}) from above, we suppose that $(1 + d^{ - 1 -  \varepsilon})  B$
is completely filled up with the sets $D_i$ associated to balls with radii $r_B$, and hence,
no $D_j$ associated to a ball with larger radius intersects $(1 + d^{  -1  -  \varepsilon})  B$.
When we consider the sum (\ref{SSsum}), but just for the balls with radius $r_B$,  we obtain
the upper bound $(1 + d^{ - 1 -  \varepsilon})^d$. For the second level, we consider all balls in the
subcollection   with radii in $(r_B, (1 + d^{ - 1 -  \varepsilon}) r_B]$, and
as before, we suppose that $(1 + d^{ - 1 -  \varepsilon})^2 B \setminus (1 + d^{ - 1 -  \varepsilon})  B$
is completely filled up with the sets $D_j$ associated to these balls. The estimate we obtain
for this second level is $(1 + d^{ - 1 -  \varepsilon})^d  - 1 =  d^{ - \varepsilon}
 + O(d^{ -2  \varepsilon})$. For balls with radii in $((1 + d^{ - 1 -  \varepsilon})^k  r_B, 
 (1 + d^{ - 1 -  \varepsilon})^{k + 1} r_B]$, $0 \le k < L$, we use the same estimate.
 Adding up over all scales we obtain
 $$
 \sum_{j=1}^N \frac{\mu D_{i_j}}{\mu B(x_{i_j}, s_{i_j})}\mathbf{1}_{B(x_{i_j}, s_{i_j})}(z) 
 \le
 1 + 1 + d^{ - \varepsilon}
 + O(d^{ -2  \varepsilon})
 $$
 $$
  + (1 +  (d^{ - \varepsilon}
 + O(d^{ -2  \varepsilon})) (1 + o(d^{-1}))(1 + \varepsilon) d^{1 + \varepsilon} \log d)
 \le  (1 + O(d^{-\varepsilon}))(1 + \varepsilon) d \log d.
 $$
 Multiplying this bound with the bound from (\ref{SSsum1})
 and adding an $\varepsilon$ to absorb the big Oh terms, 
 for $d$ large enough we obtain 
$\|M\|_{L^1-L^{1,\infty}} \le (2 +  3 \varepsilon) d \log d$.  
  \end{proof}

\end{document}